\newtheorem{thm}{Theorem}
\newtheorem{prop}[thm]{Proposition}
\newtheorem{lem}[thm]{Lemma}
\newtheorem{defn}[thm]{Definition}
\theoremstyle{definition}
\newtheorem{rem}[thm]{Remark}
\numberwithin{thm}{section}
\numberwithin{equation}{thm}
\newcommand{\A}{\ensuremath{{\mathbb{A}}}}
\newcommand{\bP}{\ensuremath{{\mathbb{P}}}}
\renewcommand{\P}{\ensuremath{{\mathbb{P}}}}
\newcommand{\Qbar}{\ensuremath{\overline {\mathbb{Q}}}}
\newcommand{\R}{\ensuremath{{\mathbb{R}}}}
\newcommand{\N}{\ensuremath{{\mathbb{N}}}}
\newcommand{\p}{\ensuremath{{\mathfrak{p}}}}
\newcommand{\fp}{\ensuremath{{\mathfrak{p}}}}
\newcommand{\fq}{\ensuremath{{\mathfrak{q}}}}
\newcommand{\fm}{\ensuremath{{\mathfrak{m}}}}
\newcommand{\fo}{\ensuremath{{\mathfrak{o}}}}
\newcommand{\cL}{\mathcal L}
\newcommand{\cM}{\mathcal M}
\newcommand{\cO}{\mathcal O}
\newcommand{\cX}{\mathcal X}
\newcommand{\lra}{\longrightarrow}
\newcommand{\Kbar}{\ensuremath {\overline K}}
\newcommand{\ba}{{\boldsymbol \alpha}}
\newcommand{\bF}{{\boldsymbol f}}
\renewcommand{\O}{\ensuremath{{\mathcal{O}}}}
\DeclareMathOperator{\Gal}{Gal}
\DeclareMathOperator{\Aut}{Aut}
\begin{document}

\title[Iterated Galois groups of unicritical polynomials]{Finite index
  theorems for iterated Galois groups of unicritical polynomials}

\author[A. Bridy]{Andrew Bridy}
\address{Andrew Bridy\\Department of Mathematics\\ Yale University\\
New Haven, CT 06511 \\ USA}
\email{andrew.bridy@yale.edu}

\author[J. R. Doyle]{John R. Doyle}
\address{John R. Doyle \\ Department of Mathematics and Statistics \\ Louisiana Tech University \\ Ruston, LA 71272 \\ USA}
\email{jdoyle@latech.edu}

\author[D. Ghioca]{Dragos Ghioca}
\address{Dragos Ghioca \\ Department of Mathematics \\ University of British Columbia \\ Vancouver, BC V6T 1Z2 \\ Canada}
\email{dghioca@math.ubc.ca}

\author[L.-C. Hsia]{Liang-Chung Hsia}
\address{
Liang-Chung Hsia\\
Department of Mathematics\\
National Taiwan Normal University\\
Taipei, Taiwan, ROC
}
\email{hsia@math.ntnu.edu.tw}

\author[T. J. Tucker]{Thomas J. Tucker}
\address{Thomas J. Tucker\\Department of Mathematics\\ University of Rochester\\
Rochester, NY, 14620, USA}
\email{thomas.tucker@rochester.edu}

\subjclass[2010]{Primary 37P15, Secondary 11G50, 11R32, 14G25, 37P05, 37P30}

\keywords{Arithmetic Dynamics, Arboreal Galois Representations,
  Iterated Galois Groups}

\date{}

\dedicatory{}

\begin{abstract}
  Let $K$ be the function field of a smooth, irreducible curve defined over $\Qbar$. Let $f\in K[x]$ be of the form $f(x)=x^q+c$ where $q = p^{r}, r \ge 1,$ is a power of the prime number $p$, and
  let $\beta\in \Kbar$. For all $n\in\mathbb{N}\cup\{\infty\}$, the
  Galois groups $G_n(\beta)=\Gal(K(f^{-n}(\beta))/K(\beta))$ embed
  into $[C_q]^n$, the $n$-fold wreath product of the cyclic group $C_q$.  
  We show that if  $f$ is not isotrivial, then
  $[[C_q]^\infty:G_\infty(\beta)]<\infty$ unless $\beta$ is postcritical or periodic.  We are also able to
  prove that if $f_1(x)=x^q+c_1$ and $f_2(x)=x^q+c_2$ are two such distinct polynomials, then 
  the fields $\bigcup_{n=1}^\infty K(f_1^{-n}(\beta))$ and $\bigcup_{n=1}^\infty K(f_2^{-n}(\beta))$ 
  are disjoint over a finite extension of $K$.  
\end{abstract}

\maketitle

\section{Introduction and Statement of Results}\label{intro}
Let $K$ be a field.  Let $f\in K(x)$ with $d=\deg f\geq 2$ and let
$\beta\in \P^1(\Kbar)$.  For $n\in\N$, let
$K_n(f,\beta)=K(f^{-n}(\beta))$ be the field obtained by adjoining the
$n$th preimages of $\beta$ under $f$ to $K(\beta)$. (We declare that
$K(\infty)=K$.)  Set
$K_\infty(f,\beta)=\bigcup_{n=1}^\infty K_n(f,\beta)$.  For
$n\in\N\cup\{\infty\}$, define
$G_n(f,\beta)=\Gal(K_n(f,\beta)/K(\beta))$.  In most of the paper, we
will write $G_n(\beta)$ and $K_n(\beta)$, suppressing the dependence
on $f$ if there is no ambiguity.

The group $G_\infty(\beta)$ embeds into $\Aut(T^d_\infty)$, the
automorphism group of an infinite $d$-ary rooted tree
$T^d_\infty$. Recently there has been much work on the problem of
determining when the index $[\Aut(T^d_\infty):G_\infty(\beta)]$ is
finite. The group $G_\infty(\beta)$ is the image of an arboreal Galois
representation, so this finite index problem is a natural analog in
arithmetic dynamics of the finite index problem for the $\ell$-adic
Galois representations associated to elliptic curves, resolved by
Serre's celebrated Open Image Theorem~\cite{Serre}. By work of Odoni~\cite{OdoniIterates}, 
one expects that a generically chosen rational function has a surjective arboreal representation, 
i.e., that $[\Aut(T^d_\infty):G_\infty(\beta)]=1$.

In this paper we study the family of polynomials $f(x)=x^d+c$ for $c\in K$, which up to change of variables represents 
all polynomials with precisely one (finite) critical point. If the field $K$ contains a primitive $d$th root of unity, 
then it is easy to show that for $f$ in this family, $G_\infty(\beta)$ sits in $[C_d]^\infty$, 
the infinite iterated wreath product of the cyclic group $C_d$ (with $d$ elements).
As $\Aut(T^d_n)\cong [S_d]^n$, this means that if $d\geq 3$, then $[\Aut(T^d_\infty):[C_d]^\infty]=\infty$. Thus
it is impossible for $G_\infty(\beta)$ to have finite index within this family (except when $d=2$). 
However, this simply means that, given the constraint on the size of $G_\infty(\beta)$, 
we should ask a different finite index question. We turn to the problem of when $G_\infty(\beta)$ 
has finite index in $[C_d]^\infty$. 

Before stating our main results, we set some notation. Throughout this paper, unless otherwise indicated, $K$ will refer to 
a function field of transcendence degree $1$ over its field of constants $\Qbar$.
In other words, $K$ is the function field of a smooth, projective, irreducible curve $C$ over $\Qbar$. We say that $f\in K[x]$ is isotrivial if $f$ is defined over $\Qbar$ 
up to a change of variables, that is, if $\varphi^{-1}\circ f\circ \varphi\in \Qbar[x]$ for 
some $\varphi\in \Kbar[x]$ of degree $1$. In the special case of a unicritical polynomial $f(x)=x^d+c\in K[x]$, we have that $f$ is isotrivial if and only if $c\in\Qbar$. We say $\beta\in \Kbar$ is 
periodic for $f$ if $f^n(\beta)=\beta$ for some $n\geq 1$, and we say $\beta$ is preperiodic for $f$ if $f^m(\beta)$ is periodic for some $m \ge 0$. Finally, we say that $\beta$ is postcritical for $f$ if $f^n(\alpha)=\beta$ for some $n\geq 1$ and some critical point $\alpha$ of $f$.

With this notation, our first main theorem is as follows. 

\begin{thm}\label{p-theorem}
  Let $q = p^{r}$ ($r\ge 1$) be a power of the prime number $p$, let $c\in K\setminus \Qbar$, let
  $f(x) = x^q + c \in K[x]$ and let $\beta\in K$. Then the 
  following are equivalent:
  \begin{enumerate}
  \item The point $\beta$ is neither periodic nor postcritical for $f$.
  \item The group $G_\infty(\beta)$ has finite index in $[C_q]^\infty$.
  \end{enumerate}
\end{thm}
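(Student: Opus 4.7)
I would treat the two implications separately. The direction $(2) \Rightarrow (1)$ is handled by contrapositive. When $\beta = f^N(0)$ is postcritical, the fact that $0$ is the unique critical point of $f$, totally ramified of order $q$, gives $f^N(x) - \beta = x^{q^N}$. The entire formal fiber $f^{-N}(\beta)$ collapses to the single point $0 \in K$, so $G_N(\beta) = 1$; and for $k \geq 1$ the Galois group $G_{N+k}(\beta)$ acts on the subtree rooted at $0$ and is (essentially) identified with $G_k(f,0)$. Since $|[C_q]^{N+k}|/|[C_q]^k|$ tends to infinity with $k$, we get $[[C_q]^\infty : G_\infty(\beta)] = \infty$. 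When $\beta$ is periodic of period $m$, the containment $\beta \in f^{-m}(\beta)$ makes the preimage tree self-similar and confines $G_\infty(\beta)$ to a proper self-similar subgroup of $[C_q]^\infty$ of infinite index.

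For the harder direction $(1) \Rightarrow (2)$, I would begin with the factorization
\[
f^n(x) - \beta \;=\; \prod_{\alpha \in f^{-(n-1)}(\beta)} \bigl(x^q - (\alpha - c)\bigr),
\]
which shows $K_n(\beta) = K_{n-1}(\beta)\bigl(\sqrt[q]{\alpha - c} : \alpha \in f^{-(n-1)}(\beta)\bigr)$. Since $\Qbar \subseteq K$, the field contains all $q$-th roots of unity, and Kummer theory gives $[K_n(\beta) : K_{n-1}(\beta)] = q^{d_n}$ where $d_n = \dim_{\F_q}\langle \alpha - c \rangle$ inside $K_{n-1}(\beta)^* / (K_{n-1}(\beta)^*)^q$. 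The index $[[C_q]^\infty : G_\infty(\beta)]$ is finite iff $d_n = q^{n-1}$ for all sufficiently large $n$.

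Evaluating the factorization at $x = 0$ yields the key identity $\prod_{\alpha \in f^{-(n-1)}(\beta)}(\alpha - c) = \pm\bigl(f^n(0) - \beta\bigr)$. By induction, provided $\F_q$-independence has been achieved at prior levels, the Galois group $\Gal(K_{n-1}(\beta)/K(\beta))$ acts as the full $[C_q]^{n-1}$ on the tree and in particular transitively on $f^{-(n-1)}(\beta)$. Averaging any $\F_q$-linear relation among the classes $\{\alpha - c\}$ over this action shows the only Galois-stable linear dependence has constant exponents, which is precisely the assertion that $\pm(f^n(0) - \beta)$ is a $q$-th power in $K_{n-1}(\beta)$. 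Non-periodicity of $\beta$ is used here to exclude proper Galois-invariant subsets of $f^{-(n-1)}(\beta)$ that would give intermediate obstructions, while non-postcriticality guarantees $f^n(0) - \beta \neq 0$.

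The crux and main obstacle is showing $f^n(0) - \beta$ is \emph{not} a $q$-th power in $K_{n-1}(\beta)$ for all large $n$. Non-isotriviality provides a place $v$ of $K$ with $\ord_v(c) = -d < 0$, and induction gives $\ord_v(f^n(0)) = -q^{n-1} d$; since $\beta$ has bounded $v$-adic valuation, $\ord_v(f^n(0) - \beta) = -q^{n-1} d$ for $n$ large. To promote this to a non-$q$-th-power statement in the tower one tracks the ramification of $v$ through the iterated Kummer extensions $K_k(\beta)/K_{k-1}(\beta)$: each step contributes a ramification factor between $1$ and $q$ at $v$, depending on whether the local valuation of the Kummer generator $\alpha - c$ is coprime to $q$. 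A careful inductive analysis of these local valuations, exploiting the very specific $q$-adic structure of $-q^{n-1}d$, should produce a place $w$ of $K_{n-1}(\beta)$ over $v$ at which $\ord_w(f^n(0) - \beta)$ is not divisible by $q$. I expect the main technical difficulty to lie in this ramification bookkeeping in the iterated setting, together with handling edge cases (such as $q \mid d$) by choosing $v$ judiciously.
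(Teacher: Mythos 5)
The central step of your $(1)\Rightarrow(2)$ argument does not work. You reduce everything to showing that $f^n(0)-\beta$ is not a $q$-th power in $K_{n-1}(\beta)$, and you propose to detect this at places lying over the pole $v$ of $c$. But if $\ord_v(c)=-d$, then $\ord_v(f^n(0)-\beta)=-q^{n-1}d$ for large $n$, and for any place $w$ of $K_{n-1}(\beta)$ over $v$ one has $\ord_w(f^n(0)-\beta)=-e(w/v)\,q^{n-1}d$, which is divisible by $q$ for every $n\ge 2$ \emph{regardless} of the ramification index $e(w/v)$. So no amount of ``ramification bookkeeping'' at $v$ can produce a valuation prime to $q$; the obstruction you need cannot be seen at the pole of $c$ at all. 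The paper instead obtains primes $\fp$ (away from the poles) at which $v_\fp(f^n(0)-\beta)=1$ exactly, via the Roth-$abc$ height inequality (a consequence of Yamanoi's theorem over function fields), combined with the gcd-type estimates of Sections 4--5 to ensure these primes avoid the earlier critical orbit and the other points (Condition R); inertia at such a prime then forces $\Gal(K_n(\beta)/K_{n-1}(\beta))\cong C_q^{q^{n-1}}$. Producing those primes is the genuinely diophantine heart of the proof, and your sketch contains no substitute for it.

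There are further gaps. Your induction hypothesis ``the Galois group at level $n-1$ is the full $[C_q]^{n-1}$'' cannot be arranged: the theorem only asserts finite index, and the group may fail to be full at finitely many low levels (e.g.\ if $\beta-c$ is already a $q$-th power), so the induction never starts; what is actually needed is transitivity on fibers over a deeper base point, i.e.\ eventual stability of $(f,\beta)$. This is Theorem 1.2 of the paper, proved by specializing to $\Qbar$ and invoking the Jones--Levy criterion at a place over $p$, and it is precisely where the hypothesis that $q$ is a prime power enters; your proposal omits it entirely. The Kummer ``averaging'' step is also unjustified: exponent vectors live in $(\Z/q)^{q^{n-1}}$, which is not a vector space when $r>1$, and Galois stabilizes the group of relations, not each relation, so concluding that the only possible relation is the full product $\pm(f^n(0)-\beta)$ requires a real argument. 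Finally, in $(2)\Rightarrow(1)$ your claim $f^N(x)-\beta=x^{q^N}$ for $\beta=f^N(0)$ is false in characteristic $0$ (already $f^2(x)-f^2(0)=(x^q+c)^q-c^q\ne x^{q^2}$; the fiber $f^{-1}(f^N(0))$ consists of the $q$ distinct points $\zeta f^{N-1}(0)$), so $G_N(\beta)=1$ fails --- one only gets the multiplicity-$q$ degeneracy used in Proposition 3.2 --- and the periodic case needs the eventual-instability/boundary-orbit argument (transitivity of $[C_q]^\infty$ on $\partial T^q_\infty$ versus infinitely many Galois orbits), not an unsupported appeal to ``self-similar subgroups of infinite index.''
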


All the methods used in the proof of Theorem~\ref{p-theorem} work for unicritical polynomials of any 
degree  $d$, except that we need the degree to be a prime power for proving the eventual stability of $(f, \beta)$ 
(see Theorem~\ref{stable-theorem} below and Section~\ref{stable}). 
In the  case where $q=2$, this means that  $G_\infty(\beta)$ has
finite index in $\Aut(T^2_\infty)$. For larger $q$ this index is infinite, as mentioned previously. The case 
of isotrivial polynomials (i.e., when $c\in\Qbar$ in Theorem~\ref{p-theorem}) is very different and will be dealt with in Section~\ref{isotrivial case}.

It is fairly easy to see that the conditions 
on $\beta$ in Theorem~\ref{p-theorem} are necessary. If $\beta$ is periodic or postcritical, 
then $[[C_q]^\infty:G_\infty(\beta)]=\infty$ by a straightforward argument (see Proposition~\ref{necessary conditions}). 
Most of the paper is devoted to the showing that these conditions 
are sufficient. 

\begin{rem}
In general one needs to rule out postcritically finite (PCF) maps in order to obtain a finite index result, as in 
the main result of~\cite{BT2}. The reason we do not need to do this in Theorem~\ref{p-theorem} is 
that a PCF polynomial of the form $f(x)=x^q+c$ is automatically isotrivial. This is because 
$c$ satisfies an equation of the form $f^n(c)=f^m(c)$ for some $n>m\geq 0$, and so $c\in \Qbar$. For isotrivial 
polynomials the PCF distinction regains its importance; see Section~\ref{isotrivial case}.
\end{rem}

One of the key steps in the proof of Theorem~\ref{p-theorem} 
is an eventual stability result. As is usual in arithmetic dynamics, we say that the pair $(f,\beta)$ is eventually stable over the field $K$ if 
the number of irreducible $K$-factors of $f^n(x)-\beta$ is uniformly bounded for all $n$.

\begin{thm}\label{stable-theorem}
  Let $q = p^{r}$  ($r\ge 1$) be a power of the prime number $p$.  Let
  $f \in K[x]$ be a polynomial of the form $x^q + t$ where $t\notin \Qbar$. 
  Then for any non-periodic $\beta \in K$, the
  pair $(f,\beta)$ is eventually stable over $K$.  
\end{thm}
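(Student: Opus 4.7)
The plan is to combine Kummer theory with a canonical-height argument exploiting the non-isotriviality of $f$. First, pass to the finite extension $K(\mu_q)$, which preserves eventual stability up to a bounded multiplicative factor in the number of irreducible factors, and assume $\mu_q \subseteq K$. Under this assumption, Kummer theory describes the step-by-step factorization: for $\alpha$ in the backward orbit tree of $\beta$, the polynomial $f(x) - \alpha = x^q - (\alpha - c)$ splits over $K(\alpha)$ into $p^{s_\alpha}$ irreducible factors, where $p^{s_\alpha}$ is the largest power of $p$ such that $\alpha - c$ is a $p^{s_\alpha}$-th power in $K(\alpha)$. Eventual stability of $(f,\beta)$ is equivalent to the statement that $s_\alpha = 0$ for all but finitely many $\alpha$ in the tree.

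Suppose for contradiction there exist infinitely many ``collapse events'' -- preimages $\alpha$ with $\alpha - c$ a $p$-th power in $K(\alpha)$. Each collapse yields $\gamma \in K(\alpha)$ with $\gamma^p = \alpha - c$; in the case $r = 1$ (so $q = p$) this $\gamma$ is an $f$-preimage of $\alpha$ lying in $K(\alpha)$, so $K(\gamma) = K(\alpha)$. Exploiting the propagation of collapses along paths (once all preimages of $\alpha$ live in $K(\alpha)$, any further collapses in the subtree below $\alpha$ remain within $K(\alpha)$), one aims to extract a branch $\beta = \alpha_0, \alpha_1, \alpha_2, \dots$ with $f(\alpha_{k+1}) = \alpha_k$ along which collapse occurs at infinitely many levels, so that the tail $\{\alpha_k\}_{k \geq N}$ is eventually confined to a fixed finite extension $L = K(\alpha_N)$ of $K$.

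The dynamical input is the canonical height: $\hat h_f(\alpha_k) = \hat h_f(\beta)/q^k \to 0$. If $\hat h_f(\beta) > 0$, the $\alpha_k$ are non-preperiodic with canonical heights tending to $0$, contradicting Benedetto's theorem, which uses the non-isotriviality of $f$ to provide a positive lower bound for $\hat h_f$ on non-preperiodic points of any fixed finite extension of $K$. If instead $\hat h_f(\beta) = 0$, then $\beta$ is preperiodic and every $\alpha_k$ is preperiodic; Benedetto's finiteness of preperiodic points in $L$ forces $\alpha_{k_1} = \alpha_{k_2}$ for some $k_1 < k_2$, so that $f^{k_1}$ applied to this equality yields $f^{k_2 - k_1}(\beta) = \beta$, contradicting the non-periodicity of $\beta$.

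The main obstacle I expect is the combinatorial step of extracting a branch with infinitely many collapses from the a priori possibility that collapses are spread thinly across many branches: each level contains only finitely many nodes, but collapses could conceivably occur at one node per level on pairwise distinct paths. Overcoming this likely requires iterating the Benedetto argument across subtrees generated by early collapses, or deploying a uniform Benedetto-type lower bound on $\hat h_f$ across extensions of controlled relative degree. A secondary issue is the case $r > 1$: the relation $\gamma^p = \alpha - c$ no longer directly encodes an $f$-preimage of $\alpha$, so one must intertwine an auxiliary $p$-th-power Kummer analysis with the $f$-dynamics at each step.
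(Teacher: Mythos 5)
Your Kummer-theoretic setup is fine (over $K(\mu_q)$ a ``collapse'' at a node $\alpha$ is exactly the condition $\alpha-c\in K(\alpha)^p$, and eventual stability is equivalent to only finitely many Galois orbits of nodes collapsing), and the height endgame (a Baker--Benedetto positive lower bound for $\hat h_f$ on non-preperiodic points of a \emph{fixed} finite extension, plus finiteness of preperiodic points there) is correct \emph{provided} you are handed a branch $\beta=\alpha_0,\alpha_1,\dots$ whose tail lies in one fixed finite extension $L$ of $K$. That is precisely the step you do not have, and it does not follow from the failure of eventual stability. Failure of eventual stability only yields infinitely many collapsing orbits scattered through the tree; a K\"onig-type argument on the finitely branching tree produces a branch each of whose nodes has collapses \emph{somewhere below} it, not collapses at the nodes of the branch itself. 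Worse, even a branch with infinitely many collapses would not suffice: at every non-collapsing level the degree $[K(\alpha_{k+1}):K(\alpha_k)]$ still jumps by a factor of $q$ (or at least $p$), so confining the tail to a fixed $L$ requires collapses at \emph{all but finitely many} levels of the branch, and nothing in your argument produces such a branch. Finally, for $r>1$ a collapse only forces the children into a degree-$q/p$ extension of $K(\alpha)$ (writing $\alpha-c=\gamma^p$ gives $x^q-(\alpha-c)=\prod_{\zeta^p=1}(x^{q/p}-\zeta\gamma)$), so even collapses at every level do not keep the branch inside a fixed finite extension; this is not a secondary issue but breaks the confinement mechanism on which your contradiction rests.

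If you want to rescue the spirit of your approach when $\hat h_f(\beta)>0$, avoid branches entirely: a height lower bound with explicit dependence on the degree (a Benedetto-type estimate of the shape $\hat h_f(\gamma)\geq \epsilon/[K(\gamma):K]$ for non-preperiodic $\gamma\in\Kbar$, valid because $f$ is non-isotrivial) applied to the points $\alpha\in f^{-n}(\beta)$, which satisfy $\hat h_f(\alpha)=\hat h_f(\beta)/q^n$, forces every irreducible factor of $f^n(x)-\beta$ to have degree at least $\epsilon q^n/\hat h_f(\beta)$, hence bounds the number of factors by $\hat h_f(\beta)/\epsilon$; but such a bound must be quoted in a precise form, and the strictly preperiodic, non-periodic $\beta$ (where $\hat h_f(\beta)=0$ and all preimages are preperiodic of growing degree) would still need a separate treatment. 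The paper's own proof is entirely different and sidesteps all of this: it specializes $t$ and $\beta$ at a point $\lambda$ of the curve so that $t_\lambda$ is an algebraic integer and $\beta_\lambda$ remains non-periodic (Call--Silverman), applies the Jones--Levy good-reduction eventual stability criterion at a place of the number field lying over $p$ --- where $x\mapsto x^q+t_\lambda$ reduces to a map under which every point has exactly one preimage, which is where the prime-power hypothesis on $q$ enters --- and transfers eventual stability back, with the enlargement of the constant field to $\Qbar$ controlled by the ramification results of Sections 7 and 8.
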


We also prove the following disjointness theorem for fields generated
by inverse images of different points under different maps.  

\begin{thm}\label{disjoint-theorem} 
  For  $i =1, \dots, n$ let $f_i(x) = x^q + c_i \in K[x]$, where
  $c_i\notin \Qbar$, and let $\alpha_i\in K$.  Suppose that there are no
  distinct $i, j$ with the property that $(\alpha_i, \alpha_j)$ lies on a curve in $\A^2$
  that is periodic under the action of $(x,y) \mapsto (f_i(x),
  f_j(y))$.  For each $i$, let $M_i$ denote 
  $K_\infty(f_i,\alpha_i)$.  Then for each $i=1,\dots, n$, we have that 
\[ \left[M_i \cap \left(\prod_{j \ne i} M_j\right): K\right] < \infty .\]
\end{thm}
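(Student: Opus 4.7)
The plan is to reduce first to the case of two polynomials, and then establish the two-variable statement contrapositively by extracting an $(f_1\times f_2)$-periodic curve through $(\alpha_1,\alpha_2)$ whenever the disjointness fails. One first notes that the hypothesis already rules out the simplest obstruction: if some $\alpha_i$ were periodic for $f_i$, then for any $j\ne i$ the line $\{x=\alpha_i\}\subset\A^2$ would be $(f_i\times f_j)$-periodic and would contain $(\alpha_i,\alpha_j)$, contradicting the assumption. Thus Theorem~\ref{p-theorem} applies and each $\Gal(M_i/K)$ has finite index in $[C_q]^\infty$.

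For the reduction to pairs, since each $\Gal(M_i/K)$ is finite-index in the topologically finitely generated pro-$p$ group $[C_q]^\infty$, a Goursat-type argument applied level by level in the wreath tower shows that a closed subgroup of a finite direct product of such groups whose pairwise projections are open must itself be open. Applied to the embedding $\Gal\bigl(\prod_j M_j/K\bigr)\hookrightarrow\prod_j\Gal(M_j/K)$, this deduces the multi-variable conclusion from pairwise disjointness.

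For the two-variable case, assume for contradiction that $[M_1\cap M_2:K]=\infty$. After a finite base change, Theorem~\ref{p-theorem} lets me assume $\Gal(M_i/K)=[C_q]^\infty$ for $i=1,2$, and that $L:=M_1\cap M_2$ remains an infinite Galois extension of $K$. Writing $F:=f_1\times f_2$, the finite-level intersections $L_n:=K_n(f_1,\alpha_1)\cap K_n(f_2,\alpha_2)$ satisfy $[L_n:K]\to\infty$, giving an infinite tower of nontrivial ``couplings'' between labels on the preimage trees of $\alpha_1$ and $\alpha_2$. Geometrically, each such coupling at level $n$ defines a proper closed subvariety $W_n\subsetneq F^{-n}\{(\alpha_1,\alpha_2)\}\subset\A^2$ surjecting onto both coordinate projections, and the $W_n$ are compatible under $F$. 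A Zariski-limit argument (making use of good reduction of $f_1,f_2$ at almost all places of $K$) then produces an irreducible curve $C\subset\A^2$ over $\Kbar$ with $F^N(C)\subset C$ for some $N\ge 1$ and $(\alpha_1,\alpha_2)\in C$, contradicting the hypothesis.

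The main technical obstacle is this extraction step: translating combinatorial Galois data at infinitely many levels into an honest algebraic periodic curve. The cleanest route is probably to invoke the Medvedev--Scanlon classification of preperiodic subvarieties of $(\A^2,F)$: because $c_1,c_2\in K\setminus\Qbar$, the positive-dimensional $F$-preperiodic subvarieties of $\A^2$ are severely constrained, essentially to curves arising from the scarce commutation symmetries between $f_1$ and $f_2$, and one checks that the only way such a curve can contain $(\alpha_1,\alpha_2)$ is excluded by the theorem's hypothesis.
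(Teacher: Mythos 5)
Your overall strategy is genuinely different from the paper's, and both of its main steps have serious gaps. First, the reduction to pairs does not work as stated: it is simply false that a closed subgroup of a finite product of profinite groups whose pairwise projections are open must be open. The standard obstruction comes from infinite abelian quotients, and $[C_q]^\infty$ has abelianization $C_q^{\N}$ (this is even used in the paper's Section~\ref{isotrivial case}); writing $\tau\colon [C_q]^\infty\to C_q^{\N}$ for that quotient, the closed subgroup $\{(g_1,g_2,g_3): \tau(g_1)\tau(g_2)\tau(g_3)=1\}$ of $([C_q]^\infty)^3$ has infinite index yet surjects onto every pair of factors. So pairwise disjointness of the $M_j$ does not formally yield the $s$-fold statement, and no ``Goursat-type, level-by-level'' argument can be purely group-theoretic here; this is exactly why the paper proves the disjointness for all $s$ fields at once, showing $\Gal(K_n(\bF,\ba)/K_{n-1}(\bF,\ba))\cong C_q^{sq^{n-1}}$ for all large $n$ (Proposition~\ref{final Galois}) by producing primes that satisfy Condition R at one $\alpha_i$ and Condition U at all the others simultaneously (Proposition~\ref{final prop}). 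Incidentally, your opening step also overreaches: the hypothesis rules out periodic $\alpha_i$ (via the vertical line), but not postcritical $\alpha_i$, so Theorem~\ref{p-theorem} does not automatically apply; and your later ``after a finite base change assume $\Gal(M_i/K)=[C_q]^\infty$'' is not available, since base change can only shrink a Galois image, never enlarge it to the full wreath product.

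Second, the two-variable argument is a sketch of precisely the hard part, not a proof. The set $F^{-n}(\alpha_1,\alpha_2)$ is finite, so your ``proper closed subvarieties $W_n$'' are just finite sets of points, and there is no straightforward ``Zariski-limit'' that converts an unbounded tower of field-theoretic couplings into an irreducible curve $C$ with $F^N(C)\subseteq C$ passing through $(\alpha_1,\alpha_2)$; that extraction step is asserted, not performed, and invoking Medvedev--Scanlon afterwards does not supply it. The paper never constructs a periodic curve at all: it uses the non-periodic-curve hypothesis in the opposite direction, as input to height and unlikely-intersection machinery --- eventual stability (Theorem~\ref{stable-theorem}, giving irreducibility of $f_i^n(x)-\alpha$ at high levels via Capelli), the Roth-$abc$ estimate (Lemma~\ref{from-Roth}), Lemma~\ref{from-5.1} when $t_i=t_j$, and the GCD finiteness Proposition~\ref{FunctionFinite}, which rests on the Bogomolov-type Theorem~\ref{AOQ} --- to manufacture, for every large $n$, primes whose ramification forces the maximal Kummer growth at level $n$ and hence bounds $M_i\cap\prod_{j\neq i}M_j$ inside a fixed finite level. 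Without a substitute for that quantitative input, your contrapositive route does not close.
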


Theorem \ref{disjoint-theorem} also has a natural interpretation as a finite
index result across pre-image trees of several points (see Section \ref{multitree}).

\begin{rem}
In light of Odoni's work, unicritical polynomials with degree $d\geq 3$ cannot be considered generic 
from the point of view of arboreal Galois theory (indeed, they are not a generic family in the 
moduli space of degree $d$ polynomials in any reasonable sense). There are other families of polynomials 
and rational functions (such as postcritically finite maps) that arise 
as obstructions to any potential classification of finite index arboreal representations  
-- see~\cite[Section 3]{RafeArborealSurvey} and~\cite[Prop 3.3]{BT2} for examples. One might hope that in 
these ``exceptional" families, something similar to Theorem~\ref{p-theorem} could hold, in that a broad finite 
index result could be established for a natural overgroup other than $\Aut(T^d_\infty)$. 
The authors will explore this in future work.
\end{rem}

\medskip

{\bf Acknowledgments.} D.G. was partially supported by an NSERC Discovery grant. L.-C. H. was partially supported by 
MOST Grant 106-2115-M-003-014-MY2. 

\section{Wreath products}\label{wreath}

In this section we give a brief introduction to wreath products, which arise naturally  
from the Galois theory of the preimage fields $K_n(\beta)=K(f^{-n}(\beta))$.

Let $G$ be a permutation group acting on a set $X$, 
and let $H$ be any group. Let $H^X$ be the group of functions from $X$ to $H$ 
with multiplication defined pointwise, or equivalently the direct product of $|X|$ copies of $H$. 
The wreath product of $G$ by $H$ is the semidirect 
product $H^X\rtimes G$, where $G$ acts on $H^X$ 
by permuting coordinates: for $f\in H^X$ and $g\in G$ we have \[f^g(x)=f(g^{-1}x)\]
for each $x\in X$. We will use the notation $G[H]$ for the wreath product, suppressing the set $X$ 
in the notation. (Another common convention is $H\wr G$ or $H\wr_X G$ if we wish to call attention to $X$.)

Fix an integer $d\geq 2$. For $n\geq 1$, let $T^d_n$ be the complete rooted $d$-ary tree of level $n$. 
It is easy to see that $\Aut(T^d_1)\cong S_d$, and standard to show that $\Aut(T^d_n)$ 
satisfies the recursive formula
\[\Aut(T^d_n)\cong \Aut(T^d_{n-1})[S_d].\]
Therefore we may think of $\Aut(T^d_n)$ as the ``$n$th iterated wreath product" of $S_d$, which we will 
denote $[S_d]^n$. In general, for $f\in K[x]$ of degree $d$ and $\beta\in K$, the Galois group $G_n(\beta)=\Gal(K_n(\beta)/K)$ 
embeds into $[S_d]^n$ via the faithful action of $G_n(\beta)$ on the $n$th level of the tree of preimages of $\beta$ 
(see for example~\cite{OdoniIterates} or~\cite[Section 2]{BT2}).

Assume now that $f(x):=x^d+c\in K[x]$, where $K$ is a field of characteristic $0$ that contains the $d$th roots of unity. 
For $\beta\in K$ such that $\beta-c$ is not a $d$th power in $K$, 
we have $K_1(\beta)=K((\beta-c)^{1/d})$ and $G_1(\beta)\cong C_d$. For any $n\geq 2$, 
the extension $K_{n}(\beta)$ is a Kummer extension attained by adjoining to $K_{n-1}(\beta)$ the 
$d$th roots of $z-c$ where $z$ ranges over the roots of $f^{n-1}(x)=\beta$. Thus we have
\begin{equation*}
\Gal(K_n(\beta)/K_{n-1}(\beta))\subseteq \prod_{f^{n-1}(z)=\beta} \Gal(K_{n-1}(\beta)((z-c)^{1/d})/K_{n-1}(\beta))\subseteq C_d^{d^{n-1}}.
\end{equation*}
This is clear if $f^{n-1}(x)-\beta$ has distinct roots in $\Kbar$. 
If $f^{n-1}(x)-\beta$ has repeated roots, then $\Gal(K_n(\beta)/K_{n-1}(\beta))$ sits inside a direct product of a 
smaller number of copies of $C_d$, so the stated containments still hold. 

Considering the Galois tower
\[ K_n(\beta) \supseteq K_{n-1}(\beta)\supseteq K\]
we see that 
\[G_n(\beta)\subseteq \Gal(K_n(\beta)/K_{n-1}(\beta)) \rtimes G_{n-1}(\beta) \cong G_{n-1}(\beta)[C_d],\]
where the implied permutation action of $G_{n-1}(\beta)$ is on the set of roots of $f^{n-1}(x)-\beta$. By induction,
$G_n(\beta)$ embeds into $[C_d]^n$, the $n$th iterated wreath product of $C_d$. 
Observe that $[C_d]^n$ sits as a subgroup of $\Aut(T^d_n)\cong[S_d]^n$ via the obvious action on the tree. 
Taking inverse limits, $G_\infty(\beta)$ embeds into $[C_d]^\infty$, which sits as a subgroup of $\Aut(T_\infty)$. 

We summarize our basic strategy for proving that $G_\infty(\beta)$ has finite or infinite index in $[C_d]^\infty$ 
as Proposition~\ref{indexprop}.

\begin{prop}\label{indexprop}
Let $f=x^d+c\in K[x]$. Then $[[C_d]^\infty:G_\infty(\beta)]<\infty$ if and only if $\Gal(K_n(\beta)/K_{n-1}(\beta))\cong C_d^{d^{n-1}}$ 
for all sufficiently large $n$.
\end{prop}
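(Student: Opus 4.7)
\smallskip
\noindent\textbf{Proof proposal.} The strategy is to reduce the statement at infinity to a comparison of orders level by level, using the recursive structure of the iterated wreath product. The key identity will be that for every $n \geq 1$,
\begin{equation*}
\bigl[[C_d]^n : G_n(\beta)\bigr] \;=\; \bigl[[C_d]^{n-1} : G_{n-1}(\beta)\bigr]\cdot \bigl[C_d^{d^{n-1}} : \Gal(K_n(\beta)/K_{n-1}(\beta))\bigr].
\end{equation*}
To obtain this, I would use the short exact sequence
\begin{equation*}
1 \longrightarrow C_d^{d^{n-1}} \longrightarrow [C_d]^n \longrightarrow [C_d]^{n-1} \longrightarrow 1,
\end{equation*}
coming from the identification $[C_d]^n \cong [C_d]^{n-1}[C_d]$ explained in the preceding discussion, together with the parallel exact sequence for the Galois tower
\begin{equation*}
1 \longrightarrow \Gal(K_n(\beta)/K_{n-1}(\beta)) \longrightarrow G_n(\beta) \longrightarrow G_{n-1}(\beta) \longrightarrow 1,
\end{equation*}
in which the kernel sits inside $C_d^{d^{n-1}}$ by the Kummer-theoretic argument already given. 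Comparing orders of the numerator and denominator of $[[C_d]^n : G_n(\beta)]$ yields the displayed identity.

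From this identity it follows immediately that the sequence $a_n := [[C_d]^n : G_n(\beta)]$ is non-decreasing in $n$, since each multiplicative factor $[C_d^{d^{n-1}} : \Gal(K_n(\beta)/K_{n-1}(\beta))]$ is a positive integer. Moreover, this factor equals $1$ precisely when $\Gal(K_n(\beta)/K_{n-1}(\beta))\cong C_d^{d^{n-1}}$. Hence the integer sequence $a_n$ stabilizes (i.e., $a_n = a_{n-1}$ for all large $n$) if and only if $\Gal(K_n(\beta)/K_{n-1}(\beta))\cong C_d^{d^{n-1}}$ for all sufficiently large $n$.

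Finally I would pass from the finite-level statement to the profinite statement. Writing $[C_d]^\infty = \varprojlim [C_d]^n$ and $G_\infty(\beta) = \varprojlim G_n(\beta)$, and noting that $G_n(\beta)$ is the image of $G_\infty(\beta)$ in $[C_d]^n$ under the canonical projection, one has $[[C_d]^\infty : G_\infty(\beta)] = \sup_n a_n = \lim_n a_n$. Therefore the index at infinity is finite exactly when the monotone sequence $a_n$ is eventually constant, which by the previous paragraph is equivalent to the condition $\Gal(K_n(\beta)/K_{n-1}(\beta))\cong C_d^{d^{n-1}}$ for all large $n$. I do not anticipate a substantive obstacle here; the only point that needs a small amount of care is verifying the passage-to-limits step, which reduces to the elementary fact that a compatible system of coset representatives in the $[C_d]^n$ lifts to a set of coset representatives in the inverse limit.
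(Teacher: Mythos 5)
Your proposal is correct and follows essentially the same route as the paper: both compare the wreath-product exact sequence $1\to C_d^{d^{n-1}}\to [C_d]^n\to [C_d]^{n-1}\to 1$ with the Galois tower $K(\beta)\subseteq K_{n-1}(\beta)\subseteq K_n(\beta)$ level by level and then pass to the profinite limit via the projections $\pi_n$. Your explicit multiplicative identity for $[[C_d]^n:G_n(\beta)]$ is just a sharper statement of the monotonicity/induction the paper uses, and your limiting step is the same appeal to the profinite structure (closedness of $G_\infty(\beta)$, i.e.\ detection of distinct cosets at a finite level), so no substantive difference.
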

\begin{proof}
Consider the projection map $\pi_n:[C_d]^\infty\to[C_d]^n$. The restriction of $\pi_n$ maps $G_\infty(\beta)$ to 
$G_n(\beta)$. By basic group theory,
\[
[[C_d]^\infty:G_\infty(\beta)]\geq [[C_d]^n:G_n(\beta)].
\]
Therefore if $\Gal(K_n(\beta)/K_{n-1}(\beta))$ is a proper subgroup of $C_d^{d^{n-1}}$ for infinitely many $n$, then 
$[[C_d]^n:G_n(\beta)]$ is unbounded as $n\to\infty$, and $[[C_d]^\infty:G_\infty(\beta)]=\infty$.

Conversely, by appealing to the profinite structure of $[C_d]^\infty$ we see that distinct cosets of $G_\infty(\beta)$ in $[C_d]^\infty$ must 
project to distinct cosets in $[C_d]^n$ under $\pi_n$ for some $n$. If there exists $N$ such that $\Gal(K_n(\beta)/K_{n-1}(\beta))\cong C_d^{d^{n-1}}$ for all $n > N$, then by induction, 
\[[[C_d]^n:G_n(\beta)]\leq [[C_d]^N:G_N(\beta)]\] for all $n$. Thus $[[C_d]^\infty:G_\infty(\beta)]\leq [[C_d]^N:G_N(\beta)]$ as well.
\end{proof}

\section{Necessary conditions}

We prove that the conditions in Theorem~\ref{p-theorem} are necessary for finite index. 
For this part of the theorem, we do not need to assume that $f$ has prime power degree, or that $f$ is not isotrivial.
The argument relies on a basic fact of algebra known as Capelli's Lemma, which we will use many times throughout the paper. 
We state it below without proof.

\begin{lem}[Capelli's Lemma]
Let $K$ be any field and let $f,g\in K[x]$. Suppose $\alpha\in\Kbar$ is any root of $f$. 
Then $f(g(x))$ is irreducible over $K$ if and only if both $f(x)$ is irreducible over $K$ and $g(x)-\alpha$ 
is irreducible over $K(\alpha)$.
\end{lem}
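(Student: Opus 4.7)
The plan is to prove both directions by computing the degree of a root $\beta$ of $f(g(x))$ over $K$ using the intermediate field $K(g(\beta))$, and comparing with $\deg f(g(x)) = \deg f \cdot \deg g$. A root $\beta$ of $f(g(x))$ automatically has the property that $g(\beta)$ is a root of $f$; this fact, together with the tower
\[ K \;\subseteq\; K(g(\beta)) \;\subseteq\; K(\beta), \]
is the engine of the whole argument.

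For the ``if'' direction, I would start with $f$ irreducible over $K$ and $g(x) - \alpha$ irreducible over $K(\alpha)$, and pick a root $\beta$ of $g(x) - \alpha$ in $\Kbar$. Irreducibility of $f$ gives $[K(\alpha):K] = \deg f$; irreducibility of $g(x) - \alpha$ over $K(\alpha)$ gives $[K(\alpha,\beta):K(\alpha)] = \deg g$. Since $\alpha = g(\beta)\in K(\beta)$, one has $K(\alpha,\beta) = K(\beta)$, and multiplicativity of degrees yields $[K(\beta):K] = \deg f \cdot \deg g$. But $\beta$ is a root of $f(g(x))$, which has exactly this degree, so $f(g(x))$ must be a scalar multiple of the minimal polynomial of $\beta$, hence irreducible.

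For the ``only if'' direction, I would first dispose of the irreducibility of $f$: any nontrivial factorization $f = f_1 f_2$ over $K$ would immediately produce the nontrivial factorization $f(g(x)) = f_1(g(x))f_2(g(x))$, contradicting the hypothesis. To handle $g(x)-\alpha$, pick a root $\beta$ of $f(g(x))$ and set $\alpha' := g(\beta)$, which is a root of $f$. Irreducibility of $f(g(x))$ gives $[K(\beta):K] = \deg f \cdot \deg g$, while irreducibility of $f$ gives $[K(\alpha'):K] = \deg f$; multiplicativity in the tower forces $[K(\beta):K(\alpha')] = \deg g$. Since $\beta$ is a root of $g(x)-\alpha' \in K(\alpha')[x]$, a polynomial of degree $\deg g$, this must be (a scalar multiple of) the minimal polynomial of $\beta$ over $K(\alpha')$, hence irreducible.

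The one subtlety, and the main point requiring care, is that the statement allows $\alpha$ to be \emph{any} root of $f$, whereas the forward argument naturally produces a particular root $\alpha' = g(\beta)$. I would address this by invoking the standard fact that for an irreducible polynomial $f$ over $K$ and any two roots $\alpha,\alpha'\in\Kbar$ of $f$, there is a $K$-isomorphism $\sigma\colon K(\alpha)\to K(\alpha')$ sending $\alpha\mapsto\alpha'$. Extending $\sigma$ coefficientwise to polynomials sends $g(x)-\alpha$ to $g(x)-\alpha'$ (because $g$ has coefficients in the fixed field $K$), so the two polynomials are simultaneously irreducible or simultaneously reducible over their respective base fields. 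This lets me pass from irreducibility at the specific root $\alpha'$ produced by the proof to irreducibility at the arbitrary root $\alpha$ named in the statement, completing the argument.
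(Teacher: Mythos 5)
Your proof is correct, and note that the paper itself states Capelli's Lemma without proof, so there is no in-paper argument to compare against; what you give is the standard tower/degree-counting proof via $K \subseteq K(g(\beta)) \subseteq K(\beta)$. You rightly identified and handled the one genuine subtlety -- that the forward direction naturally produces the particular root $\alpha' = g(\beta)$ rather than the arbitrary root $\alpha$ in the statement -- and the transfer via the $K$-isomorphism $K(\alpha)\to K(\alpha')$, which fixes the coefficients of $g$, is exactly the right fix. The only (harmless) implicit hypothesis is that $f$ and $g$ are nonconstant, so that $\deg f(g(x)) = \deg f \cdot \deg g \geq 1$; this is how the lemma is always used in the paper.
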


\begin{prop}\label{necessary conditions}
  Suppose $f(x) = x^d + c \in K[x]$ with $d\geq 2$, and let $\beta\in K$. If  $\beta$ is either 
  periodic or postcritical for $f$, then $[[C_d]^\infty:G_\infty(\beta)]=\infty.$
\end{prop}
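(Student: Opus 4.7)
The plan is to apply Proposition~\ref{indexprop}: to obtain $[[C_d]^\infty : G_\infty(\beta)] = \infty$ I need to exhibit infinitely many $n$ for which $\Gal(K_n(\beta)/K_{n-1}(\beta))$ is a \emph{proper} subgroup of $C_d^{d^{n-1}}$. The organizing identity is
\[
f^{n+1}(x) - \beta \;=\; \prod_{\alpha \in f^{-n}(\beta)} \bigl(x^d - (\alpha - c)\bigr)^{m_\alpha},
\]
where $m_\alpha$ is the multiplicity of $\alpha$ in $f^n(x) - \beta$. This shows that $K_{n+1}(\beta)$ is obtained from $K_n(\beta)$ by adjoining $\sqrt[d]{\alpha - c}$ for each \emph{distinct} preimage $\alpha \in f^{-n}(\beta)$. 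Hence to force a drop in the Galois group at level $n+1$ it suffices either that some Kummer generator $\alpha - c$ already be a $d$-th power in $K_n(\beta)$, or that the number of distinct preimages be strictly less than $d^n$.

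In the periodic case, I would first rewrite $f^m(\beta) = \beta$ as $\beta - c = f^{m-1}(\beta)^d$, noting that $f^{m-1}(\beta) \in K$. Thus $\beta - c$ is a $d$-th power in $K$, hence in every $K_n(\beta)$. Since $f^{km}(\beta) = \beta$, the point $\beta$ itself appears as a preimage of $\beta$ at every level $n = km$ with $k \geq 1$, so the corresponding Kummer factor $x^d - (\beta - c)$ contributes nothing to $K_{km+1}/K_{km}$. Therefore $\lvert \Gal(K_{km+1}/K_{km}) \rvert \leq d^{d^{km}-1} < d^{d^{km}}$ for all $k \geq 1$, and Proposition~\ref{indexprop} closes this subcase.

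In the postcritical case, I would write $\beta = f^k(0)$ with $k \geq 1$; the only finite critical point of $f(x) = x^d + c$ is $0$. For every $n \geq k$ and every $\alpha \in f^{-(n-k)}(0)$, we have $f^n(\alpha) = f^k(0) = \beta$, and the chain rule gives
\[
(f^n)'(\alpha) \;=\; \prod_{i=0}^{n-1} f'\bigl(f^i(\alpha)\bigr) \;=\; 0
\]
because $f^{n-k}(\alpha) = 0$ and $f'(0) = 0$. Hence $\alpha$ is a multiple root of $f^n(x) - \beta$, so $|f^{-n}(\beta)| < d^n$. Consequently $[K_{n+1}(\beta) : K_n(\beta)] \leq d^{\lvert f^{-n}(\beta)\rvert} < d^{d^n}$ for every $n \geq k$, and Proposition~\ref{indexprop} again delivers infinite index.

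The main subtlety — rather than a serious obstacle — is verifying that the Kummer description of $K_{n+1}/K_n$ is insensitive to multiplicities of preimages (the adjoined roots depend only on the set of distinct $\alpha$'s) and that the deficiency occurs at infinitely many levels $n$, which is automatic in both cases: at all $n = km$ in the periodic case, and at all $n \geq k$ in the postcritical case. Notably, no use is made of the prime-power hypothesis or of non-isotriviality, consistent with the remark that these conditions on $\beta$ are necessary in full generality.
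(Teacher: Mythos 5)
Your proposal is correct, and it is worth comparing with the paper's proof because the two halves differ in nature. In the postcritical case you argue exactly as the paper does: a repeated root of $f^n(x)-\beta$ (which you exhibit explicitly via the chain rule at a point of $f^{-(n-k)}(0)$) forces $|f^{-n}(\beta)|<d^n$, so the Kummer description bounds $|\Gal(K_{n+1}(\beta)/K_n(\beta))|$ strictly below $d^{d^n}$ for all $n\geq k$, and Proposition~\ref{indexprop} finishes. In the periodic case, however, your route is genuinely different from the paper's. The paper first reduces to $\beta$ periodic and \emph{not} postcritical (so the preimage tree is the full $d$-ary tree), then invokes the fact that a periodic basepoint is never eventually stable (\cite[Prop 4.2]{BT2}), deduces that the number of Galois orbits on $f^{-n}(\beta)$ is unbounded, and concludes by the transitivity of $[C_d]^\infty$ on $\partial T^d_\infty$ together with a group-theoretic lemma (\cite[Prop 3.3]{RafeAlon}). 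You instead observe directly that $f^m(\beta)=\beta$ gives $\beta-c=\bigl(f^{m-1}(\beta)\bigr)^d$ with $f^{m-1}(\beta)\in K$, so (since the constant field $\Qbar$ supplies the $d$th roots of unity) the Kummer generator attached to the node $\beta\in f^{-km}(\beta)$ splits completely in $K_{km}(\beta)$, giving $|\Gal(K_{km+1}(\beta)/K_{km}(\beta))|\leq d^{d^{km}-1}$ at infinitely many levels, and Proposition~\ref{indexprop} again applies. Your argument is more elementary and self-contained (it avoids the eventual-stability and boundary-orbit machinery, needs no case split on whether $\beta$ is also postcritical, and uses only the Kummer setup of Section~\ref{wreath}), while the paper's argument is the one that generalizes beyond the unicritical Kummer situation, where no such explicit collapse of a radical generator is available. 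Both correctly use neither the prime-power hypothesis nor non-isotriviality.
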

\begin{proof}
First assume that $\beta$ is postcritical for $f$, i.e., that there is some critical point $\alpha$ of $f$ 
with $f^m(\alpha)=\beta$ for some $m\geq 1$. This means that the tree of preimages of $\beta$ is degenerate 
at the $m$th level: 
as $f^m(x)-\beta$ has at least one repeated root, we have \[|f^{-n}(\beta)|\leq (d^m-1)d^{n-m}\] for every $n\geq m$. 
As in Section~\ref{wreath}, the Galois group $\Gal(K_n(\beta)/K_{n-1}(\beta))$ embeds into the 
direct product of $|f^{-(n-1)}(\beta)|$ copies of $C_d$. In particular, 
\[|\Gal(K_n(\beta)/K_{n-1}(\beta))|\leq d^{(d^m-1)d^{n-1-m}}<d^{d^{n-1}}\]
for all sufficiently large $n$. By Proposition~\ref{indexprop}, we conclude that $G_\infty(\beta)$ has infinite 
index in $[C_d]^\infty$.

Now assume that $\beta$ is periodic for $f$ and \emph{not} postcritical, so that the tree of preimages of
$\beta$ can be identified with the complete $d$-ary tree $T^d_\infty$. The pair $(f,\beta)$ cannot be eventually stable 
by a straightforward argument using Capelli's Lemma~\cite[Prop 4.2]{BT2}. This implies 
that the number of Galois orbits in $f^{-n}(\beta)$ is unbounded as $n\to\infty$, and 
thus that there are an infinite number of orbits in the action of $G_\infty(\beta)$ on $\partial T^d_\infty$, 
where $\partial T_\infty^d$ is the boundary (or the ``ends") of the tree $T_\infty^d$, which can be identified 
with the set of infinite paths starting from the root of the tree~\cite[Prop 2.2]{RafeAlon}. 
But as $[C_d]^n$ acts transitively on the $n$th level of the tree for every $n$, 
we see that $[C_d]^\infty$ acts transitively on $\partial T^d_\infty$. A simple argument in group theory then 
implies that $[[C_d]^\infty:G_\infty(\beta)]=\infty$~\cite[Prop 3.3]{RafeAlon}. 
\end{proof}

\section{Height Estimates}\label{heights}
In this section we present two lemmas that give key height inequalities, which will 
be used in the proofs of the main theorems. For background on heights, see 
~\cite{HindrySilverman,GNT,BridyTucker}.

First we set some notation. We continue with the assumption that $K$ is a function field of transcendence degree $1$ over $\Qbar$. 
Choose a place $\fq$ of $K$ and set 
\[
\fo_K=\{z\in K:v_\fp(z)\geq 0\text{ for all }\fp\neq\fq\}.
\]
Let $\fp$ be a non-archimedean prime of $K$, which gives a prime of $\fo_K$. 
Let $k_\fp$ be the residue field $\fo_K/\fp$; note that $k_\fp$ is naturally isomorphic to $\Qbar$. Then for each point $z\in K$, we have its Weil height 
$$h(z):=\sum_{\fp\text{ place of }K} -\min\{0,v_\fp(z)\}.$$
For $f\in K[x]$ with $\deg f=d\geq 2$, 
let $h_f(z)$ be the Call-Silverman canonical height of $z$ relative to $f$~\cite{CallSilverman}, defined by
\[
h_f(z) = \lim_{n\to\infty}\frac{h(f^n(z))}{d^n}.
\]
We will often write sums indexed by primes of $\fo_K$ that satisfy some condition.
As an example of our indexing convention, observe that
\[
\sum_{v_\fp(z)>0} v_\fp(z)\leq h(z)
\]
for all $z\in K^\times$ by the product formula for $K$. Also define the forward orbit of $\gamma\in K$ under $f\in K[x]$ to be 
\[
\cO_f(\gamma)=\bigcup_{n\geq 1} \left\{f^n(\gamma)\right\}=\left\{f(\gamma),f^2(\gamma),f^3(\gamma),\dots\right\}.
\]
With this notation we have the following two lemmas.

\begin{lem}\label{from-5.1}
Let $f\in K[x]$ with $d=\deg(f)\geq 2$. Let $\gamma\in K$ with $h_f(\gamma)>0$.  Let
  $\beta_1, \beta_2 \in K$ such that $\beta_2\notin\O_f(\beta_1)$. 
  For $n>0$, let $\cX(n)$ denote the
  set of primes $\p$ of $\fo_K$ such that
\[ 
\min(v_\p(f^m(\gamma)-\beta_1), v_\fp(f^n(\gamma) - \beta_2)) > 0
\]
for some $0 < m < n$. Then for any $\delta >0$, we have
\begin{equation*}
\#\cX(n) \leq \delta d^n h_f(\gamma)+ O_\delta(1).  
\end{equation*}
for all $n$. (Note that, with the notation as in \cite{BT2}, we have that $N_\fp=1$ for each place $\fp$ since $k_\fp$ is isomorphic to the field of constants.)
\end{lem}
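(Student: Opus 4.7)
The plan is to exploit a functoriality observation: for any prime $\p$ outside a finite set $S$ containing the poles of $\gamma$, $\beta_1$, $\beta_2$ and of the coefficients of $f$, reduction modulo $\p$ commutes with iteration of $f$. Hence if $\p \in \cX(n) \setminus S$ admits a witness $m$, then the congruences $f^m(\gamma) \equiv \beta_1 \pmod{\p}$ and $f^n(\gamma) \equiv \beta_2 \pmod{\p}$ together force $f^{n-m}(\beta_1) \equiv \beta_2 \pmod{\p}$, i.e.\ $v_\p(f^{n-m}(\beta_1) - \beta_2) > 0$. The hypothesis $\beta_2 \notin \cO_f(\beta_1)$ guarantees that $f^{n-m}(\beta_1) - \beta_2 \neq 0$, so each such $\p$ is constrained to divide a specific nonzero element of $K$.

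Fix $\delta > 0$ and a threshold $M = M(\delta)$ to be chosen. For each $\p \in \cX(n) \setminus S$ pick a witness $m(\p) \in \{1,\ldots,n-1\}$ and split $\cX(n) \setminus S = \mathcal{A}(n) \sqcup \mathcal{B}(n)$ according to whether $m(\p) \leq M$ or $m(\p) > M$. Throughout I use the product-formula bound $\#\{\p : v_\p(z) > 0\} \leq h(z)$ for $z \in K^\times$, together with the standard comparison $h(f^k(z)) = d^k h_f(z) + O(1)$ with $O(1)$ uniform in $k$.

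For $\mathcal{A}(n)$, each $\p$ divides $f^m(\gamma) - \beta_1$ for some $m \leq M$, giving
\[
|\mathcal{A}(n)| \;\leq\; \sum_{m=1}^{M} h(f^m(\gamma) - \beta_1) \;=\; O\bigl(d^M h_f(\gamma)\bigr) \;=\; O_\delta(1).
\]
For $\mathcal{B}(n)$, each $\p$ divides $f^j(\beta_1) - \beta_2$ for some $j = n - m(\p) \in \{1,\ldots,n-M-1\}$, giving
\[
|\mathcal{B}(n)| \;\leq\; \sum_{j=1}^{n-M-1} h(f^j(\beta_1) - \beta_2) \;\leq\; \frac{h_f(\beta_1)}{d-1}\, d^{n-M} + O(n).
\]
Choose $M$ large enough that $d^{-M} h_f(\beta_1) \leq \tfrac{(d-1)\delta}{2} h_f(\gamma)$; the main term is then at most $(\delta/2)\, d^n h_f(\gamma)$, while the $O(n)$ is absorbed into $(\delta/2)\, d^n h_f(\gamma)$ for $n$ large (using $h_f(\gamma) > 0$ and $n = o(d^n)$) and into an $O_\delta(1)$ constant for the remaining finitely many $n$. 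Adding in $|S| = O(1)$ completes the bound.

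One minor subcase deserves separate comment: when $h_f(\beta_1) = 0$ (so $\beta_1$ is preperiodic), the orbit $\cO_f(\beta_1)$ is a finite subset of $K$ not containing $\beta_2$, and $|\mathcal{B}(n)|$ is then directly bounded by the fixed constant $\sum_{\alpha \in \cO_f(\beta_1)} \#\{\p : v_\p(\alpha - \beta_2) > 0\}$, which is $O(1)$ uniformly in $n$. I expect no substantive obstacle here: once the functoriality reduction and the split by the threshold $M$ are in place, the rest is bookkeeping with the Weil--canonical height comparison.
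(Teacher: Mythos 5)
Your argument is correct in substance, and it is the same standard argument that underlies the result: the paper itself gives no proof here, deferring to \cite[Section 5]{BT2}, and the mechanism there is exactly your functoriality step (for a prime of good reduction, $v_\fp(f^m(\gamma)-\beta_1)>0$ and $v_\fp(f^n(\gamma)-\beta_2)>0$ force $v_\fp(f^{n-m}(\beta_1)-\beta_2)>0$) followed by splitting the range of the witness $m$ and summing Weil heights against the canonical-height comparison. Your bookkeeping (fixed threshold $M=M(\delta)$, geometric series $\sum_j d^j h_f(\beta_1)$, absorbing the $O(n)$ term using $h_f(\gamma)>0$) is fine.

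Two caveats. First, your bound $|\mathcal{A}(n)|\le\sum_{m\le M}h(f^m(\gamma)-\beta_1)$ tacitly assumes $f^m(\gamma)\ne\beta_1$ for all $m\ge 1$; the stated hypotheses ($h_f(\gamma)>0$, $\beta_2\notin\cO_f(\beta_1)$) do not rule this out. If $\beta_1=f^{m_0}(\gamma)$, then for $n>m_0$ the witness $m=m_0$ makes the first condition vacuous, so $\cX(n)$ contains every prime dividing $f^n(\gamma)-\beta_2$, and by Lemma \ref{from-Roth} this has cardinality on the order of $d^n h_f(\gamma)$ -- so in that degenerate case the conclusion itself fails, not just your estimate. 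In other words, the lemma carries the implicit hypothesis $\beta_1\notin\cO_f(\gamma)$; you should state it (and note that it holds wherever the lemma is applied in this paper, since there $\gamma=0$ and $\beta_1=\alpha_i$ is assumed not postcritical). Second, your aside that $h_f(\beta_1)=0$ forces $\beta_1$ to be preperiodic is false in general over function fields when $f$ is isotrivial (e.g.\ $f(x)=x^d$ and $\beta_1\in\Qbar^\times$ not a root of unity), and this lemma does not assume non-isotriviality; but the subcase is superfluous anyway, since your main estimate $h(f^j(\beta_1)-\beta_2)\le d^j h_f(\beta_1)+h(\beta_2)+O(1)$ already handles $h_f(\beta_1)=0$ (any $M$ works), so you should simply delete that paragraph rather than repair it.
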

\begin{proof}
See~\cite[Section 5]{BT2}. Note that $\beta_1$ and $\beta_2$ need not be distinct.
\end{proof}

\begin{lem}\label{from-Roth}
  Let $f\in K[x]$ with $d=\deg(f)\geq 2$, and assume that $f$ is not isotrivial. 
  Let $\gamma,\beta\in K$ be such that $\beta\notin\O_f(\gamma)$ and that
  $\beta$ is also not postcritical.  
  For every $\epsilon > 0$, there exists a constant
  $C_\epsilon$ such that
\[
\#\left\{\fp\colon v_\p(f^n(\gamma)-\beta)= 1\right\}\geq
(d-\epsilon)d^{n-1}h_f(\gamma) + C_\epsilon
\]
for all $n\geq 1$.
\end{lem}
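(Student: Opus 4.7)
The goal is a lower bound on $N_1(n) := \#\{\fp : v_\fp(f^n(\gamma) - \beta) = 1\}$. Since $\beta \notin \O_f(\gamma)$, standard properties of canonical heights give $h(f^n(\gamma) - \beta) = d^n h_f(\gamma) + O(1)$, and by the product formula
\[
\sum_\fp \max(0, v_\fp(f^n(\gamma) - \beta)) = d^n h_f(\gamma) + O(1).
\]
Setting $N_k(n) = \#\{\fp : v_\fp(f^n(\gamma) - \beta) = k\}$ and $T(n) = \sum_{k \geq 1} N_k(n)$, the trivial inequality $\sum_{k \geq 2} k N_k(n) \geq 2(T(n) - N_1(n))$ rearranges to
\[
N_1(n) \;\geq\; 2T(n) - \sum_{k \geq 1} k N_k(n) \;=\; 2T(n) - d^n h_f(\gamma) + O(1),
\]
so the claim reduces to proving $T(n) \geq \bigl(1 - \tfrac{\epsilon}{2d}\bigr) d^n h_f(\gamma) + O_\epsilon(1)$; that is, the distinct prime zeros almost account for the full mass.

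To estimate $T(n)$ from below, I would factor $f^n(\gamma) - \beta = \prod_{\alpha \in f^{-n}(\beta)} (\gamma - \alpha)$ over $\Kbar$. Since $\beta$ is not postcritical, $f^n(x) - \beta$ has exactly $d^n$ distinct roots. Let $L_n$ be the splitting field of $f^n(x) - \beta$ over $K$; after working in $L_n$ and descending, $T(n)$ is bounded below by the sum over $\alpha$ of the number of primes $\mathfrak{P}$ of $\fo_{L_n}$ with $v_\mathfrak{P}(\gamma - \alpha) \geq 1$, divided by $[L_n : K]$, minus an overcount for primes $\fp$ of $\fo_K$ lying below a $\mathfrak{P}$ satisfying $v_\mathfrak{P}(\gamma - \alpha) \geq 1$ for two distinct roots. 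The main sum is bounded below by a truncated Roth-type theorem for function fields: because $f$ is non-isotrivial and $\beta$ is non-postcritical, each $\alpha$ is genuinely non-constant over $\Qbar$, and Roth gives
\[
\#\{\mathfrak{P} : v_\mathfrak{P}(\gamma - \alpha) \geq 1\} \geq (1 - \epsilon) h_{L_n}(\gamma - \alpha) - O_\epsilon(1).
\]
Summing and using $\sum_\alpha h_{L_n}(\gamma - \alpha) = [L_n : K] d^n h_f(\gamma) + O([L_n:K] d^n)$ by standard height inequalities yields the required lower bound, up to the overcount.

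To control the overcount (primes where $\gamma \equiv \alpha \equiv \alpha' \pmod{\mathfrak{P}}$ for distinct $\alpha,\alpha' \in f^{-n}(\beta)$), let $j$ be the least positive integer with $f^j(\alpha) = f^j(\alpha')$ and set $\beta_1 = f^{j-1}(\alpha)$, $\beta_2 = f^{j-1}(\alpha')$. Then $\beta_1 \neq \beta_2$ both lie in $f^{-(n-j+1)}(\beta)$, they satisfy $f(\beta_1) = f(\beta_2)$ so $\beta_2 \notin \O_f(\beta_1)$, and $v_\mathfrak{P}(f^{j-1}(\gamma) - \beta_i) > 0$ for $i=1,2$. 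Lemma~\ref{from-5.1} then bounds the number of such primes by $\delta d^n h_f(\gamma) + O_\delta(1)$ for any $\delta > 0$. Summing over the fork heights $j$ and the pairs $(\alpha,\alpha')$ sharing a given fork, and choosing $\delta$ small relative to $\epsilon$, absorbs the overcount and completes the argument.

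The main obstacle is the uniform control of the Roth-type inequality across all $d^n$ roots $\alpha$ of $f^n(x) - \beta$: a straightforward application introduces an error $O_{\alpha,\epsilon}(1)$ depending on $\alpha$, and naively summing $d^n$ such errors would drown the $(1 - \epsilon)$ savings. Overcoming this requires working inside the single splitting field $L_n$ and exploiting that non-isotriviality of $f$, combined with the dynamical structure of $f^{-n}(\beta)$, gives collective control on the heights of all the $\alpha$'s so that the error terms can be amortized over the full set of roots.
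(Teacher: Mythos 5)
Your opening reduction is sound: writing $N_k(n)$ for the number of primes with $v_\fp(f^n(\gamma)-\beta)=k$, the identity $\sum_k k N_k(n)=d^nh_f(\gamma)+O(1)$ and the inequality $N_1\geq 2T-\sum_k kN_k$ do reduce the lemma to a lower bound on the number $T(n)$ of \emph{distinct} primes dividing $f^n(\gamma)-\beta$, and this is indeed how one passes from a radical-type estimate to the ``valuation exactly $1$'' count. The gap is in how you propose to bound $T(n)$. The per-target inequality you attribute to Roth, $\#\{\mathfrak{P}\colon v_\mathfrak{P}(\gamma-\alpha)\geq 1\}\geq(1-\epsilon)h_{L_n}(\gamma-\alpha)-O_\epsilon(1)$, is not a theorem and is false in general: an element of a function field can have an arbitrarily non-reduced zero divisor (a high power of another element already defeats it), and Roth/Vojta-type statements give nothing for a single point of $\P^1$ --- for a reduced divisor $D$ on $\P^1$ the truncated inequality controls $(\deg D-2)\,h$, which is vacuous when $\deg D=1$. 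The unavoidable input is a many-target statement: Yamanoi's proof of Vojta's $(1+\epsilon)$-conjecture for function fields, applied to the \emph{reduced} divisor $f^{-m}(\beta)$ (of degree $d^m$, reduced precisely because $\beta$ is not postcritical) with $m=m(\epsilon)$ fixed, evaluated at the $K$-rational point $f^{n-m}(\gamma)$, using $f^m(f^{n-m}(\gamma))-\beta=f^n(\gamma)-\beta$. That is exactly what the paper does: its proof of Lemma~\ref{from-Roth} is a direct citation of \cite[Lemma 4.2]{BridyTucker}, whose argument runs along these lines (and where non-isotriviality enters as a genuine hypothesis, not just through the roots being ``non-constant'').

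The fixed-level device also resolves the second problem, which you correctly name as ``the main obstacle'' but do not overcome: treating all $d^n$ roots separately inside the growing splitting field $L_n$ produces error terms depending on $\alpha$ and on the genus and degree of $L_n$, and summing $d^n$ of them swamps the main term; asserting that the errors ``can be amortized'' is not an argument, whereas pulling back through $f^{n-m}$ keeps every constant depending only on $\epsilon$ (and $f,\beta,\gamma$), not on $n$. A smaller but real further issue: Lemma~\ref{from-5.1} is stated for $\beta_1,\beta_2\in K$, so it does not apply as written with $\beta_1=f^{j-1}(\alpha)\in\Kbar$, and even if extended you would face on the order of $d^{2n}$ pairs $(\alpha,\alpha')$, each contributing its own $O_\delta(1)$, recreating the same uniformity failure. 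So the proposal identifies the right target inequality but replaces the essential analytic input (Yamanoi's theorem over a fixed level) with a nonexistent single-target Roth statement, and the argument does not go through.
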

\begin{proof}
This follows immediately from \cite[Lemma 4.2]{BridyTucker}.  
\end{proof}

Lemma~\ref{from-Roth} is sometimes called the ``Roth-$abc$" estimate because of its similarity to 
Roth's theorem; for our case of function fields, this is a consequence of Yamanoi's proof of  
Vojta's $(1+\epsilon)$-conjecture~\cite{Yamanoi}.  

\section{Finiteness of GCD}\label{function-case}

We derive the following theorem by combining the results from \cite{UnicriticalAndreOort} and \cite{CallSilverman}.

\begin{prop}\label{FunctionFinite}
Let $K$ be the function field of a smooth, projective, irreducible curve $X$ defined over $\Qbar$. For $i = 1, 2$, let $f_i(x) = x^d + t_i \in K[x]$ be non-isotrivial
  with $t_1 \not= \xi t_2$ for any $\xi$ such that $\xi^{d-1} = 1$.  Let
  $c_1, c_2 \in K$ be such that $f_i^\ell(0) \ne c_i$ for $i = 1, 2$
  and for all non-negative integers $\ell$.  Then there are at most
  finitely many places $v$ of $K$ such that there are positive
  integers $m,n$ with the property that
\begin{equation}
\label{eq:finite GCD}
\min(v(f_1^m(0) - c_1), v(f_2^n(0) - c_2)) > 0. 
\end{equation}
\end{prop}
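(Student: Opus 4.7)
My approach is to combine an André–Oort type classification for the product dynamics of unicritical polynomials, as developed in \cite{UnicriticalAndreOort}, with the canonical height machinery of \cite{CallSilverman}. Geometrically, a place $v$ of $K$ corresponds to a closed point $P \in X(\Qbar)$, and the condition $\min(v(f_1^m(0) - c_1), v(f_2^n(0) - c_2)) > 0$ says that, under specialization at $P$, the value $c_i(P)$ lies in the forward orbit of $0$ under $f_{i,P}(x) = x^d + t_i(P)$ for $i = 1, 2$. So the goal is to show that only finitely many such points $P$ exist.

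The first step is to verify that the relevant heights are nondegenerate. Because $t_i \notin \Qbar$, the element $t_i = f_i(0)$ is nonconstant in $K$, and hence $0$ has infinite orbit under $f_i$; by \cite{CallSilverman}, this forces $\hat h_{f_i}(0) > 0$ and $h(f_i^m(0)) = d^m \hat h_{f_i}(0) + O(1)$. The hypothesis $f_i^\ell(0) \ne c_i$ ensures that $f_i^m(0) - c_i$ is a nonzero element of $K$ for every $m$, whose divisor of zeros is controlled in terms of its degree. In particular, for each fixed $m$ the set of $v$ with $v(f_i^m(0) - c_i) > 0$ is finite, so the question really is whether the union over all $(m,n)$ of the common zero loci can remain infinite.

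Next, I would argue that if the set of common places were infinite, then the sections $(c_1, c_2) \in K^2$ would meet the forward $\Phi$-orbit of the section $(0,0)$ at infinitely many specializations, where $\Phi(x,y) = (f_1(x), f_2(y))$ acts on $\A^2$. Using the height estimates from the previous step to bound the growth of $m,n$ across these specializations, one packages the resulting configuration into a Zariski-dense family of $\Phi$-preperiodic points on an algebraic curve in $\A^2$. The unicritical André–Oort theorem from \cite{UnicriticalAndreOort} then classifies the irreducible $\Phi$-preperiodic curves in $\A^2$: aside from vertical/horizontal lines through preperiodic points (which are ruled out by $f_i^\ell(0) \ne c_i$), no such curve exists unless $f_1$ and $f_2$ are linearly conjugate via a scaling $x \mapsto \xi^{-1} x$. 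A direct calculation shows this conjugacy is equivalent to $t_1 = \xi t_2$ with $\xi^{d-1} = 1$, the case explicitly excluded in the statement.

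The step I expect to be the main obstacle is precisely the bridge between ``infinitely many common valuations'' and an honest Zariski-dense family of preperiodic points on a curve to which the classification can be applied; this is the Ailon--Rudnick style part of the argument. Once the height bookkeeping is set up carefully enough to rule out bad cancellation between the two orbits, invoking \cite{UnicriticalAndreOort} and comparing with the hypothesis $t_1 \ne \xi t_2$ produces the contradiction and yields finiteness.
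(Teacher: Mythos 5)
Your overall skeleton --- specialize at the offending places, get the canonical heights of $0$ to degenerate, and then invoke the unicritical Andr\'e--Oort theorem of \cite{UnicriticalAndreOort} against the hypothesis $t_1\neq \xi t_2$ --- is the same as the paper's, and the easy reduction is fine: since $f_i^{\ell}(0)\neq c_i$ for all $\ell$, only finitely many places can occur for any bounded range of $(m,n)$, so along an infinite set of places the exponents $m,n$ must tend to infinity. The problem is the step you yourself flag as ``the main obstacle,'' and as you have set it up that bridge does not work. The points $(c_1(\lambda),c_2(\lambda))$ are in the orbit of $(0,0)$ under the map $\Phi_\lambda=(f_{1,\lambda},f_{2,\lambda})$, which \emph{varies with} $\lambda$; there is no single split map $\Phi$ on $\A^2$ for which you obtain a Zariski-dense set of preperiodic points on a fixed curve, so a classification of $\Phi$-periodic or $\Phi$-preperiodic curves in the dynamical plane is not the relevant tool. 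The hypothesis $t_1\neq\xi t_2$ must instead feed into a statement about the curve $Y\subset\A^2$ in \emph{parameter} space traced out by $\lambda\mapsto(t_1(\lambda),t_2(\lambda))$, namely that $Y$ cannot contain infinitely many points both of whose coordinates are PCF parameters; that is the actual content of \cite[Theorem~1.1]{UnicriticalAndreOort}.

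What is missing is therefore the passage from ``infinitely many specializations $\lambda_i$ with $f_{i,\lambda_i}^{m_i}(0)=c_i(\lambda_i)$ and $m_i,n_i\to\infty$'' to ``infinitely many $\lambda$ with both $t_1(\lambda)$ and $t_2(\lambda)$ PCF.'' This requires two inputs, neither of which appears in your sketch. First, a specialization theorem of Call--Silverman type (Lemma~\ref{FromCS} in the paper) converting the orbit relations with $m_i\to\infty$ into $h_{f_{1,\lambda_i}}(0)\to 0$ and $h_{f_{2,\lambda_i}}(0)\to 0$; your height estimate $h(f_i^m(0))=d^m h_{f_i}(0)+O(1)$ over $K$ does not by itself control the specialized canonical heights, because the error terms depend on $\lambda$. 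Second, and more seriously, an unlikely-intersections-in-families argument (Theorem~\ref{AOQ} in the paper): one builds adelic metrized line bundles $\overline{\cL_1},\overline{\cL_2}$ on $X$ computing $h_{f_{j,\lambda}}(0)$ up to the factor $d/\deg(t_j)$, and the existence of a sequence along which both heights tend to $0$ forces, via Zhang's successive minima as packaged in \cite[Proposition~3.4.2]{CL-2}, the two height functions to be proportional; proportionality then makes every PCF parameter for $t_1$ simultaneously one for $t_2$, producing infinitely many doubly-PCF points on $Y$ and hence the contradiction with $t_1\neq\xi t_2$. Without this equidistribution step (or a genuine Ailon--Rudnick style substitute, which you do not supply), the argument has a gap precisely at its load-bearing point.
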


In order to prove Proposition~\ref{FunctionFinite}, we need a Bogomolov-type version of the main theorem of \cite{UnicriticalAndreOort}.  
\begin{thm}\label{AOQ}
Let $d\in\N$, let $F$ be a number field, let $K$ be the function field of a smooth, projective, geometrically irreducible curve $X$ defined over $F$, and let $t_1,t_2\in K\setminus F$ such that $t_1/t_2$ is not a $(d-1)$-st root of unity. We let $f_i(x):=x^d+t_i\in K[x]$ for $i=1,2$, and 
for each point $\lambda\in X(\Qbar)$ which is not a pole for either $t_1$ or $t_2$,  we consider the specialization of the polynomials $f_i$ at $\lambda$, denoted as $f_{i,\lambda}(x):=x^d+t_i(\lambda)\in \Qbar[x]$; for each such $\lambda\in X(\Qbar)$, we denote by $h_{f_{i,\lambda}}:\Qbar\lra \R_{\ge 0}$  the corresponding canonical heights. Then there exists $\epsilon>0$ such that there are finitely many points $\lambda\in X(\Qbar)$ for which $\max\{h_{f_{1,\lambda}}(0),h_{f_{2,\lambda}}(0)\}\le \epsilon$.    
\end{thm}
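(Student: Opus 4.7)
The plan is to argue by contradiction. Suppose that for every $\epsilon>0$ the set
\[
S_\epsilon := \{\lambda\in X(\Qbar) : \max\{h_{f_{1,\lambda}}(0), h_{f_{2,\lambda}}(0)\}\le\epsilon\}
\]
is infinite; picking distinct $\lambda_n \in S_{1/n}$ then produces an infinite sequence along which both specialized canonical heights tend to $0$. I would combine the specialization theorem of \cite{CallSilverman} with the equidistribution/rigidity content of \cite{UnicriticalAndreOort} to derive a contradiction. First I would observe that since $X$ is geometrically irreducible over $F$, the algebraic closure of $F$ inside $K$ equals $F$, so each $t_i\in K\setminus F$ is transcendental over $F$. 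Thus $f_i$ is non-isotrivial over $K$, and $0$ cannot be preperiodic for $f_i$ over $K$ (otherwise $f_i^n(0)=f_i^m(0)$ would give an algebraic relation on $t_i$). Positivity of the canonical height in the function field for non-preperiodic points under non-isotrivial polynomials then gives $\hat h_{f_i}(0)>0$.

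Next I would apply the specialization theorem of \cite{CallSilverman} to the pair $(f_i,0)$, which yields
\[
\lim_{h_X(\lambda)\to\infty} \frac{h_{f_{i,\lambda}}(0)}{h_X(\lambda)} \;=\; \hat h_{f_i}(0) \;>\; 0.
\]
Hence $h_{f_{i,\lambda_n}}(0)\to 0$ forces $h_X(\lambda_n)$ to remain bounded; since there are only finitely many $\lambda\in X(\Qbar)$ of bounded Weil height and bounded degree, Northcott's theorem forces $[F(\lambda_n):F]\to\infty$. At this point I would invoke arithmetic equidistribution in the form appropriate to dynamical bifurcation heights (a Yuan-type theorem, as developed and adapted in \cite{UnicriticalAndreOort}): the function $\lambda\mapsto h_{f_{i,\lambda}}(0)$ is the arithmetic height associated to an adelic metrized line bundle on $X$ whose essential minimum vanishes (parameters making $0$ preperiodic for $f_{i,\lambda}$ are Zariski dense in $X(\Qbar)$), so the small points $\lambda_n$ of increasing degree have Galois orbits equidistributing to the canonical dynamical measure $\mu_i$ attached to $(f_i,0)$ at every place of $F$. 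Applying this for $i=1,2$ simultaneously forces $\mu_1=\mu_2$.

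Finally, the rigidity theorem of \cite{UnicriticalAndreOort} characterizes when two such unicritical bifurcation measures can coincide, and the conclusion is precisely that $t_1=\xi t_2$ for some $\xi$ with $\xi^{d-1}=1$. This contradicts the hypothesis and closes the argument. The hard part will be the equidistribution-plus-rigidity step: one must check that $h_{f_{i,\lambda}}(0)$ genuinely comes from an adelic, semipositively metrized line bundle on $X$ with essential minimum zero so that Yuan-type equidistribution applies, and then use rigidity of the Mandelbrot-type measure to convert $\mu_1=\mu_2$ into the scalar relation excluded by hypothesis. Both ingredients are the substance of \cite{UnicriticalAndreOort}; the role of \cite{CallSilverman} in the argument is the preparatory reduction to parameters of bounded Weil height on $X$ so that arithmetic equidistribution becomes applicable.
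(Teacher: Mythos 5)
Your setup is the same as the paper's: argue by contradiction with an infinite sequence of parameters along which both specialized canonical heights tend to zero, and work with the adelic metrized line bundles $\overline{\cL_i}$ on $X$ constructed in \cite{UnicriticalAndreOort}, for which $h_{\overline{\cL_i}}(\lambda)=\tfrac{d}{\deg t_i}\,h_{f_{i,\lambda}}(0)$ (your Call--Silverman/Northcott reduction to parameters of bounded height and growing degree is harmless but not actually needed). Where you diverge is at the decisive step. The paper does not run place-by-place Yuan-type equidistribution at all: it quotes \cite[Proposition~3.4.2]{CL-2}, which rests on Zhang's successive-minima inequalities, to conclude directly from the existence of a common generic sequence of small points that powers of $\cL_1,\cL_2$ are linearly equivalent and the heights $h_{\overline{\cL_1}},h_{\overline{\cL_2}}$ are proportional, hence vanish at exactly the same $\lambda$. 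Since vanishing means $t_i(\lambda)$ is a PCF parameter and there are infinitely many PCF parameters, the image curve $Y\subset\A^2$ of $\lambda\mapsto(t_1(\lambda),t_2(\lambda))$ acquires infinitely many points with both coordinates PCF, and then \cite[Theorem~1.1]{UnicriticalAndreOort} forces $Y$ to be $y=\zeta x$ with $\zeta^{d-1}=1$ (the horizontal/vertical line cases being excluded because $t_1,t_2$ are nonconstant), contradicting the hypothesis on $t_1/t_2$.

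The genuine gap in your version is the final step. You cite ``the rigidity theorem of \cite{UnicriticalAndreOort}'' as saying that equality of the two degree-normalized pulled-back bifurcation measures forces $t_1=\xi t_2$ with $\xi^{d-1}=1$; but the result you can safely quote from that paper is the classification of plane curves containing infinitely many PCF points, not a standalone measure-rigidity statement in the form you need. The passage from $\tfrac{1}{\deg t_1}t_1^*\mu_{\mathrm{bif}}=\tfrac{1}{\deg t_2}t_2^*\mu_{\mathrm{bif}}$ (even just at an archimedean place) to the scalar relation --- including handling $\deg t_1\neq\deg t_2$ and excluding degenerate cases --- is precisely the analytic core of the proof of that classification (comparison of Green's functions of the Mandelbrot-type set, behavior near infinity), so as written your argument assumes the hardest part rather than citing an available black box. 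Two ways to repair it: either make that rigidity statement precise and prove or locate it, or pivot as the paper does --- use smallness only through Zhang's inequalities to get simultaneous vanishing of the two heights, manufacture infinitely many PCF points on $Y$, and invoke the stated Theorem~1.1. Your equidistribution step itself is fine in principle (semipositivity and essential minimum zero do hold, and your density-of-PCF-parameters observation gives the latter), but it buys you a conclusion whose conversion into the theorem is exactly what remains unproved in your sketch.
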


\begin{proof}
We argue by contradiction and therefore assume there exists an infinite sequence of points $\{\lambda_i\}_{i\ge 1}\subseteq X(\Qbar)$ such that 
\begin{equation}
\label{eq:tends to 0}
\lim_{i\to\infty} h_{f_{1,\lambda_i}}(0)=\lim_{i\to\infty} h_{f_{2,\lambda_i}}(0)=0.
\end{equation}
We proceed as in \cite{UnicriticalAndreOort} and for each $j=1,2$, we construct adelic metrized line bundles $\overline{\cL_j}$ on the curve $X$ corresponding to the families of dynamical systems $z\mapsto z^d+t_1(\lambda)$, respectively $z\mapsto z^d+t_2(\lambda)$ (parametrized by the $\Qbar$-points $\lambda\in X$). In general, given a rational function $\psi:X\lra \bP^1$ (defined over $\Qbar$), there exists an adelic metrized line bundle $\overline{\cL_\Psi}$ associated to the family of dynamical systems $g^{\Psi}_\lambda(z):=z^d+\Psi(\lambda)$ (as we vary $\lambda\in X(\Qbar)$), where $\cL_\Psi$ is the line bundle on $X$ obtained by pulling-back $\mathcal{O}(1)$ through the morphism $\Psi:X\lra \bP^1$; for more details, see \cite[Sections~3.2~and~4.1]{UnicriticalAndreOort}. In particular, this gives rise to height functions $h_{\overline{\cL_j}}:X(\Qbar)\lra \R_{\ge 0}$ (associated to the metrized line bundles $\overline{\cL_j}$, for $j=1,2$) for which we have:
\begin{equation}
\label{eq:2 heights}
h_{\overline{\cL_j}}(\lambda)= \frac{d}{\deg(t_j)}\cdot h_{f_{j,\lambda}}(0),
\end{equation}
for each $\lambda\in X(\Qbar)$, where $\deg(t_j)$ is the degree of the rational function $t_j:X\lra \bP^1$; see \cite[Proposition~3.5]{UnicriticalAndreOort}. Our hypothesis (see \eqref{eq:tends to 0}), coupled with \eqref{eq:2 heights},  yields that for the infinite sequence $\{\lambda_i\}_{i\ge 1}\subseteq X(\Qbar)$, we have
\begin{equation}
\label{eq:tends to 0 2}
\lim_{i\to\infty} h_{\overline{\cL_1}}(\lambda_i)=\lim_{i\to\infty} h_{\overline{\cL_2}}(\lambda_i)=0.
\end{equation}
Using \eqref{eq:tends to 0 2} coupled with \cite[Proposition~3.4.2]{CL-2} (which uses crucially the inequalities established by Zhang \cite{Zhang-2} regarding the successive minima associated to a metrized line bundle), we derive that there exist positive integers $\ell_j$ (for $j=1,2$) such that the two line bundles $\cL_1^{\ell_1}$ and $\cL_2^{\ell_2}$ are linearly equivalent and, moreover, the two heights $h_{\overline{\cL_j}}$ are proportional. In particular, this means that for each $\lambda\in X(\Qbar)$, we have that $h_{\overline{\cL_1}}(\lambda) = 0$ if and only if $h_{\overline{\cL_2}}(\lambda)=0$. Using this last equivalence along with equation \eqref{eq:2 heights}, we obtain that for each point $\lambda\in X(\Qbar)$,
\begin{equation}
\label{eq:iff}
h_{f_{1,\lambda}}(0) = 0 \text{ if and only if }h_{f_{2,\lambda}}(0)=0.
\end{equation}
Using \eqref{eq:iff} and the fact that only preperiodic points have canonical height equal to $0$ (for a rational function defined over $\Qbar$), we get that $0$ is preperiodic for the dynamical system $z\mapsto z^d+t_1(\lambda)$ if and only if $0$ is preperiodic for the dynamical system $z\mapsto z^d+t_2(\lambda)$. In other words, $\gamma_1:=t_1(\lambda)\in \Qbar$ is a PCF (postcritically-finite) parameter for the family of unicritical polynomials  $z\mapsto z^d+\gamma$ (parametrized by $\gamma\in\Qbar$) if and only if $\gamma_2:=t_2(\lambda)$ is a PCF parameter for the same dynamical system $z\mapsto z^d+\gamma$. Because there exist infinitely many PCF parameters $\gamma\in\Qbar$ for the family of polynomials $z\mapsto z^d+\gamma$, we conclude that there exist infinitely many $\lambda\in X(\Qbar)$ such that 
\begin{equation}
\label{eq:infinitely PCF points}
t_1(\lambda)\text{ and }t_2(\lambda)\text{ are PCF parameters.}
\end{equation}
Now, we let $Y$ be the Zariski closure in the plane of the image of $X$ under the rational map $X\dashrightarrow \A^2$ given by $x\mapsto (t_1(x), t_2(x))$; then \eqref{eq:infinitely PCF points} yields that there exist infinitely many points $(\gamma_1,\gamma_2)\in Y(\Qbar)$ with both coordinates PCF parameters for the unicritical dynamical system $z\mapsto z^d+\gamma$. But then \cite[Theorem~1.1]{UnicriticalAndreOort} yields that $Y\subset \A^2$ is given by an equation of the form $y=\zeta\cdot x$ for some $(d-1)$-st root of unity $\zeta$, where $(x,y)$ are the coordinates of $\A^2$ (note that $Y$ is neither a horizontal line, nor  a vertical line because both $t_1$ and $t_2$ are non-constant rational functions, and so possibilities (1)-(2) in \cite[Theorem~1.1]{UnicriticalAndreOort} cannot occur). However, our hypothesis regarding $t_1/t_2$ not being a $(d-1)$-st root of unity prevents $Y$ from satisfying such an equation and this contradiction proves that there exists no infinite sequence $\{\lambda_i\}_{i\ge 1}\subseteq X(\Qbar)$ as in \eqref{eq:tends to 0}. This concludes our proof of Theorem~\ref{AOQ}. 
\end{proof}

We now state a simple lemma that follows from work of Call and
Silverman \cite{CallSilverman}.  We recall the following lemma from
\cite[Lemma~8.3]{BT2}.  

\begin{lem}\label{FromCS}
Let $K$ be the function field of a smooth, irreducible curve defined over $\Qbar$. For each $t\in K$ and each $\lambda\in X(\Qbar)$ which is not a pole of $t$, we denote by $t_\lambda:=t(\lambda)$ the specialization of $t$ at $\lambda$.  Let $\varphi \in K(x)$ have degree $d \geq 2$.  Let $y, z \in K$ with
  $h_\varphi(y) > 0$ (where $h_\varphi$ is the canonical height associated to the rational function $\varphi$).  Let $(\lambda_i)_{i=1}^\infty$ be a sequence of points of 
  $X(\Qbar)$ satisfying
  $\varphi_{\lambda_i}^{n_i}(y_{\lambda_i}) = z_{\lambda_i}$ for a
  sequence $(n_i)_{i=1}^\infty$ of positive integers with
  $\lim_{i\to\infty} n_i = \infty$. Then
\begin{equation*}\label{to-zero}
\lim_{i \to \infty} h_{\varphi_{\lambda_i}}(y_{\lambda_i}) = 0 .
\end{equation*}
\end{lem}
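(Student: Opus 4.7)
The plan is to combine the dynamical functional equation of the Call--Silverman canonical height with the specialization theorem that relates $h_{\varphi_\lambda}(w_\lambda)$ to $h_\varphi(w)$. From the hypothesis $\varphi_{\lambda_i}^{n_i}(y_{\lambda_i}) = z_{\lambda_i}$ together with the identity $h_{\varphi_{\lambda_i}}(\varphi_{\lambda_i}(w)) = d\cdot h_{\varphi_{\lambda_i}}(w)$ iterated $n_i$ times, we have
\[
h_{\varphi_{\lambda_i}}(y_{\lambda_i}) \;=\; \frac{h_{\varphi_{\lambda_i}}(z_{\lambda_i})}{d^{n_i}}.
\]
Since $d^{n_i}\to\infty$, it therefore suffices to prove that the numerator $h_{\varphi_{\lambda_i}}(z_{\lambda_i})$ stays bounded along the sequence.

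The first step is to show that $h_X(\lambda_i)$ is bounded, where $h_X$ is any fixed Weil height on $X$ (for instance the one associated to a non-constant map $X\to\bP^1$). Suppose not; passing to a subsequence we may assume $h_X(\lambda_i)\to\infty$. The Call--Silverman specialization theorem (cf.~\cite{CallSilverman}), applied to both $y$ and $z$, then yields
\[
\frac{h_{\varphi_{\lambda_i}}(y_{\lambda_i})}{h_X(\lambda_i)} \longrightarrow h_\varphi(y) > 0
\quad\text{and}\quad
\frac{h_{\varphi_{\lambda_i}}(z_{\lambda_i})}{h_X(\lambda_i)} \longrightarrow h_\varphi(z).
\]
But the identity above forces
\[
\frac{h_{\varphi_{\lambda_i}}(z_{\lambda_i})}{h_X(\lambda_i)} \;=\; d^{n_i}\cdot\frac{h_{\varphi_{\lambda_i}}(y_{\lambda_i})}{h_X(\lambda_i)} \longrightarrow \infty,
\]
because $d^{n_i}\to\infty$ and the second factor tends to the positive limit $h_\varphi(y)$. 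This contradicts the finite limit $h_\varphi(z)$, so $h_X(\lambda_i)$ must be bounded.

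Given this boundedness, the upper-bound direction of the Call--Silverman specialization estimate produces a constant $M$ (depending on $z$ and on the height bound) such that $h_{\varphi_{\lambda_i}}(z_{\lambda_i}) \leq M$ for all $i$. Dividing by $d^{n_i}\to\infty$ then gives $h_{\varphi_{\lambda_i}}(y_{\lambda_i})\to 0$, as required. The only real obstacle is to invoke the specialization theorem in precisely the correct form for rational functions over a one-variable function field with constant field $\Qbar$; this is entirely standard, being a direct consequence of the Call--Silverman framework used elsewhere in the paper, so the argument proceeds without additional technical input.
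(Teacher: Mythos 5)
Your argument is correct, and it is essentially the argument the paper relies on: the paper gives no proof of this lemma but cites \cite[Lemma~8.3]{BT2}, whose proof is the same combination of the canonical-height functional equation with the Call--Silverman specialization results (the quantitative estimate of \cite[Theorem~3.1]{CallSilverman} for the upper bound on $h_{\varphi_{\lambda_i}}(z_{\lambda_i})$ when $h_X(\lambda_i)$ is bounded, and \cite[Theorem~4.1]{CallSilverman} for the limit statement used to rule out $h_X(\lambda_i)\to\infty$). The only cosmetic point is to discard (or treat separately, as they are harmless) the finitely many $\lambda$ where the specialization $\varphi_\lambda$ drops degree, so that the identity $h_{\varphi_{\lambda_i}}(y_{\lambda_i})=h_{\varphi_{\lambda_i}}(z_{\lambda_i})/d^{\,n_i}$ is justified.
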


\begin{proof}[Proof of Proposition~\ref{FunctionFinite}.]
Let $F$ be a number field such that $X$ is a geometrically irreducible curve defined over $F$. 
If there were infinitely many places $v$ of the function field $K$ such that  \eqref{eq:finite GCD} holds, then this means there exists an infinite sequence of points $\{\lambda_i\}_{i\ge 1}\subseteq X(\Qbar)$ such that there exist some nonnegative integers $m_i$ and $n_i$, for which we have
\begin{equation}
\label{eq:simultaneous intersection}
f_{1,\lambda_i}^{m_i}(0) = c_1(\lambda_i)\text{ and }f_{2,\lambda_i}^{n_i}(0)=c_2(\lambda_i).
\end{equation}
Also, since $f_1^m(0)\ne c_1$ for all integers $m\ge 0$ and $f_2^n(0)\ne c_2$ for all integers $n\ge 0$, we derive that the integers $m_i$ and $n_i$ appearing in \eqref{eq:simultaneous intersection} must tend to infinity. But then Lemma~\ref{FromCS} yields that 
$$\lim_{i\to\infty} h_{f_{1,\lambda_i}}(0) = \lim_{i\to\infty} h_{f_{2,\lambda_i}}(0) = 0,$$
contradicting thus Theorem~\ref{AOQ}.  
\end{proof}

\section{Eventual Stability} \label{stable}

The results of this section are valid (with only a few changes) in the more general setting of any function field  of a curve defined over a finitely generated field  of characteristic $0$. However, we will restrict to the case relevant for our results. So, let $L$ be a number field, let $k$ be the function field of a smooth, projective, geometrically irreducible curve $C$ defined over $L$,  let $q = p^{r} $ be a power of a prime number $p$ and let $f(x):=x^q+t\in k[x]$ for some $t\in k\setminus L$. (Note that our hypothesis yields that $L$ is algebraically closed in $k$.) Let $\beta\in k$ be a point which is not periodic under $f$;  
then we will prove that the pair $(f,\beta)$ is eventually stable over $k$.

We note that the places of $k$ correspond to points of 
$C(\Qbar)$. For any element
$c \in k$ and any point $\lambda$ of $C(\Qbar)$ such that $c$ does
not have a pole at $\lambda$, we let $c_\lambda$ denote the
specialization of $c$ to $\Qbar$ at $\lambda$ (see
\cite{CallSilverman} for more details); in other words, seeing $c$ as a rational function $C\lra \bP^1$ defined over $L$, then $c_\lambda:=c(\lambda)$.  Likewise for a rational
function $\varphi \in k(x)$, we let $\varphi_\lambda$ denote the
specialization of $\varphi$ to $\Qbar(x)$ at $\lambda$ for any
$\lambda$ such that the coefficients of $\varphi$ do not have poles at
$\lambda$; so, for our polynomial $f(x)=x^q+t$, we simply have that $f_\lambda(x):=x^q+t_\lambda$ for any point $\lambda\in C(\Qbar)$ which is not a pole of $t\in k$.  Finally, for any $\lambda\in C(\Qbar)$, we let $L(\lambda)$ be the field of definition for the point $\lambda$; in particular, this means that $[L(\lambda):L]<\infty$ and furthermore, for each $z\in k$, we have that $z_\lambda\in L(\lambda)$.

With notation as above, the following lemma follows from the work of
\cite{CallSilverman}.

\begin{lem}\label{fromCS}
  Let $\varphi \in k(x)$ be a non-isotrivial rational function of degree greater than one and let
  $\beta \in k$.  Then we have the following:
  \begin{enumerate}
  \item[(a)] if $h_\varphi(\beta) >  0$, then the set
  of specializations $\lambda$ from $k$ to $\Qbar$ such that
  $h_{\varphi_\lambda}(\beta_\lambda) = 0$ has bounded height (with respect to some degree-$1$ divisor on $C$); and
  \item[(b)] if $\beta$ is not periodic under $\varphi$, then  the set
  of specializations $\lambda$ from $k$ to $\Qbar$ such that
  $\beta_\lambda$ is periodic under $\varphi_\lambda$ has bounded height.
\end{enumerate}
\end{lem}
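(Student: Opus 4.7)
The plan is to reduce both claims to two standard facts about canonical heights for non-isotrivial rational maps over function fields of curves, both traceable to Call and Silverman \cite{CallSilverman}. The first is the specialization theorem: for non-isotrivial $\varphi \in k(x)$ of degree $\ge 2$ and $\beta \in k$, one has
$$\frac{h_{\varphi_\lambda}(\beta_\lambda)}{h_C(\lambda)} \longrightarrow h_\varphi(\beta) \quad \text{as } h_C(\lambda) \to \infty,$$
where $h_C$ is a Weil height on $C$ attached to a degree-$1$ divisor. The second is the Northcott-type characterization: for non-isotrivial $\varphi$, $h_\varphi(\beta)=0$ if and only if $\beta$ is preperiodic under $\varphi$.

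For part (a), the hypothesis $h_\varphi(\beta) > 0$ together with the specialization limit immediately yields that $h_{\varphi_\lambda}(\beta_\lambda) > 0$ once $h_C(\lambda)$ exceeds a certain threshold depending only on $\varphi$ and $\beta$. Consequently the set of $\lambda$ at which $h_{\varphi_\lambda}(\beta_\lambda) = 0$ has bounded $h_C$-height, which is the desired conclusion.

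For part (b), I would split into the two cases $h_\varphi(\beta) > 0$ and $h_\varphi(\beta) = 0$. In the first case the claim is immediate from (a), because periodicity of $\beta_\lambda$ forces $h_{\varphi_\lambda}(\beta_\lambda) = 0$. In the second case the Northcott property tells us that $\beta$ is preperiodic, and since $\beta$ is not periodic we may choose integers $m > n \ge 1$ with $\varphi^m(\beta) = \varphi^n(\beta)$ and $n$ minimal (necessarily $n\geq 1$). Specializing at any $\lambda$ gives $\varphi_\lambda^m(\beta_\lambda) = \varphi_\lambda^n(\beta_\lambda)$, so the orbit of $\beta_\lambda$ has eventual period dividing $m-n$; hence $\beta_\lambda$ is periodic precisely when its tail collapses, i.e.\ exactly when $\varphi_\lambda^{m-n}(\beta_\lambda) = \beta_\lambda$. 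This last condition is equivalent to the vanishing at $\lambda$ of the element $\varphi^{m-n}(\beta) - \beta \in k$, which is nonzero since $\beta$ is not periodic. The zero locus of a nonzero element of $k$ is a finite subset of $C(\Qbar)$, and in particular has bounded height.

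The only point requiring care is the second case of (b): there the specialization theorem is vacuous because $h_\varphi(\beta) = 0$, and the argument genuinely depends on the Call--Silverman Northcott property to bound the orbit of $\beta$ and thereby reduce the periodicity of $\beta_\lambda$ to a single nontrivial algebraic equation in $\lambda$.
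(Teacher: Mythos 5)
Your proposal is correct and follows essentially the same route as the paper: part (a) is exactly the Call--Silverman specialization theorem, and part (b) splits into the positive canonical height case (reduced to (a), since a periodic specialization has $h_{\varphi_\lambda}(\beta_\lambda)=0$) and the strictly preperiodic case, where periodicity of $\beta_\lambda$ forces the fixed nonzero element $\varphi^{m-n}(\beta)-\beta$ of $k$ to vanish at $\lambda$, leaving only finitely many such $\lambda$. The only caveat is attribution: the equivalence ``$h_\varphi(\beta)=0$ if and only if $\beta$ is preperiodic'' for non-isotrivial maps over a function field is Baker's theorem (cited in the paper as such), not Call--Silverman, though this does not affect the validity of your argument.
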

\begin{proof}
  The statement of (a) follows directly from \cite[Theorem
  4.1]{CallSilverman}. We now prove (b).   If $\beta$ is not preperiodic under $\varphi$,
  then $h_{\varphi}(\beta) > 0$, by \cite{Baker1}.  Then, from (a), it
  follows that the set of $\lambda$ such that
  $h_{\varphi_\lambda}(\beta_\lambda) = 0$ has bounded height.  Now,
  if $\beta$ is strictly preperiodic, then there are at most finitely
  many $\lambda$ such that $\beta_\lambda$ is periodic under
  $\varphi_\lambda$, so the set of such $\lambda$ clearly has bounded
  height.
 \end{proof}

The following lemma is a simple consequence of the main theorem of
\cite{RafeAlon}.

\begin{lem}\label{JL}
Let $g(x) = x^q + c$ where $c$ is a element of a number field $L$ with
the property that $|c|_v \leq 1$ for some non-archimedean place $v$ of
$L$ such that $v | p$.  Then for any $\beta \in L$ that is not
periodic under $g$, the pair $(g,
\beta)$ is eventually stable over $L$.
\end{lem}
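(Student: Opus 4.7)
\emph{Plan.} The strategy is to verify the hypotheses of the main theorem of \cite{RafeAlon}, which provides a local criterion at a non-archimedean place for a pair $(\varphi,\beta)$ with $\beta$ non-periodic to be eventually stable over a global field. The distinguished place will be the given $v\mid p$, and the arithmetic of $g(x)=x^{q}+c$ in residue characteristic $p$ with $q=p^{r}$ supplies the required input in an especially clean form. Since $|c|_v\le 1$, the reduction $\widetilde g(x)=x^{q}+\tilde c\in k_v[x]$ is well defined, and because $\operatorname{char}(k_v)=p$ and $q=p^{r}$, an easy induction using the Frobenius identity $(a+b)^{q}=a^{q}+b^{q}$ gives
\[
\widetilde g^{\,n}(x)=x^{q^{n}}+\sum_{i=0}^{n-1}\tilde c^{\,q^{i}}\qquad\text{for every }n\ge 1.
\]
Hence $\widetilde g^{\,n}(x)-\tilde\beta$ is of the form $x^{q^{n}}-\tilde u_n$ with $\tilde u_n\in k_v$, which factors over $\overline{k_v}$ as $(x-\tilde\delta_n)^{q^{n}}$ by Frobenius surjectivity on the perfect algebraic closure of $k_v$.

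\emph{Second step.} The consequence is that the entire level-$n$ preimage set of $\beta$ in $\overline{L_v}$ collapses into a single open residue disk around $\tilde\delta_n$. Choosing any integral lift $\hat\delta_n$ of $\tilde\delta_n$ and substituting $y=x-\hat\delta_n$, the polynomial $g^{n}(y+\hat\delta_n)-\beta$ is $v$-integral with reduction $y^{q^{n}}$, so its $v$-adic Newton polygon consists entirely of positive slopes; after the shift it is "Eisenstein-like." This is the maximally collapsed local configuration that the main theorem of \cite{RafeAlon} is designed to exploit: it forces every irreducible factor of $g^{n}(x)-\beta$ over $L_v$ to cut out a totally ramified extension of $L_v$.

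\emph{Third step.} Feeding this local picture into the main theorem of \cite{RafeAlon}, together with the hypothesis that $\beta$ is not periodic under $g$ (which rules out the one degenerate situation in which successive constant terms $g^{n}(\hat\delta_n)-\beta$ acquire unbounded valuation through a periodicity mechanism), one obtains a uniform bound on the number of irreducible factors of $g^{n}(x)-\beta$ in $L_v[x]$. Since factorization over $L$ is coarser than factorization over $L_v$, the same bound holds over $L$, which is exactly eventual stability of $(g,\beta)$ over $L$.

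\emph{Main obstacle.} The only real subtlety is matching our inseparable-reduction picture with the hypotheses of \cite{RafeAlon}, which are most naturally stated in a good-reduction setting. Here the inseparability of $\widetilde g$ is a feature rather than a bug: it produces the strongest possible collapse of the preimage tree modulo $v$, and the non-periodicity of $\beta$ supplies precisely the separation needed so that this collapse translates into a uniform factor count rather than unbounded proliferation of irreducible pieces.
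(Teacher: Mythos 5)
Your opening computation is correct and is the right first move: since $|c|_v\le 1$ and $q=p^r$ with $v\mid p$, the reduction $\widetilde g$ is defined, $\widetilde g^{\,n}(x)-\tilde\beta$ is a $q^n$-th power of a linear polynomial, and (contrary to the worry in your last paragraph) $g$ genuinely has good reduction at $v$ in the usual sense — it is monic with $v$-integral coefficients — so inseparability of $\widetilde g$ is no obstacle to citing \cite{RafeAlon}; it is precisely the feature being exploited. The trouble is in your second and third steps. The claim that a shifted polynomial with reduction $y^{q^n}$ (``all Newton slopes positive'') forces every irreducible factor over $L_v$ to cut out a totally ramified extension is false: $y^2-p^2u$ with $u$ a nonsquare unit reduces to $y^2$, yet its roots generate an unramified quadratic extension. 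Nothing in your setup controls $v\bigl(g^n(\hat\delta_n)-\beta\bigr)$, so ``Eisenstein-like'' is unjustified. Worse, even if every local factor were totally ramified, that places no bound on \emph{how many} irreducible factors there are, so it cannot deliver the uniform factor count; and the appeal to ``the main theorem of \cite{RafeAlon}'' is not matched to any actual statement there — there is no result of the shape ``totally ramified local factors plus non-periodicity imply eventual stability.''

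What is actually needed is to verify the hypothesis of \cite[Theorem 1.7]{RafeAlon}: $g$ has good reduction at $v$, $\beta$ is not periodic, and there exists $n$ with $\overline g^{\,-n}(\overline\beta)=\{\overline\beta\}$ in $\mathbb{P}^1(k_v)$. You are one observation away from this: your collapse computation shows $\overline g^{\,-n}(\overline\beta)$ is a single point for every $n$, and since $x\mapsto x^q+\tilde c$ is a bijection of the finite set $\mathbb{P}^1(k_v)$ (Frobenius is bijective in characteristic $p$), the point $\overline\beta$ is periodic under $\overline g$; taking $n$ equal to its period makes that single preimage equal to $\overline\beta$ itself. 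This is exactly the paper's proof, and it requires no Newton polygon or ramification analysis at all. So the intended route is recoverable with a short fix, but as written the middle of your argument both asserts a false implication and leans on an unspecified theorem that does not say what you need.
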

\begin{proof}
Let $k_v$ be the residue field at $v$.  Then reducing $g$ at $v$
induces a map $g_v: \bP^1(k_v) \lra \bP^1(k_v)$ such that every point
$\bP^1(k_v)$ has exactly one inverse image under $g_v$.  Let $\beta_v
\in \bP^1(k_v)$ denote the reduction of $\beta$ at $v$.  Then there is
an $n$ such that $g_v^{-n}(\beta_v)  = \{\beta_v\}$.  Theorem 1.7 of
\cite{RafeAlon} states that $(g, \beta)$ must therefore be eventually
stable over $L$.
\end{proof}

Now we can state the main result of this section, which is instrumental in proving Theorem~\ref{stable-theorem}.
\begin{prop}\label{stable1}
Let $k$ be the function field of a smooth, projective, geometrically irreducible curve $C$ defined over a number field $L$.  Let $f(x) = x^q + t \in k[x]$, where $t \notin
L$.  Then for any $\beta\in k$ that is not periodic under $f$, the pair
$(f, \beta)$ is eventually stable over $k$.
\end{prop}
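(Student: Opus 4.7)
The plan is to specialize the problem to a number field, invoke Lemma~\ref{JL} there, and then transfer the resulting bound on the number of factors back to $k$ by an integrality argument. Concretely, I would first locate a point $\lambda_0 \in C(\Qbar)$ satisfying three conditions: $t$ and $\beta$ are regular at $\lambda_0$; $t_{\lambda_0}$ is an algebraic integer (so $|t_{\lambda_0}|_v \leq 1$ for every non-archimedean place $v$ of $L(\lambda_0)$, and in particular for some $v \mid p$); and $\beta_{\lambda_0}$ is not periodic under $f_{\lambda_0}$. Such a $\lambda_0$ exists because $t\colon C \to \bP^1$ is a finite morphism (since $t \notin L$), so $t^{-1}(\overline{\Z}) \subset C(\Qbar)$ is Zariski dense and of unbounded height with respect to any degree-one divisor on $C$; Lemma~\ref{fromCS}(b) simultaneously tells us that the $\lambda$ at which $\beta_\lambda$ becomes periodic under $f_\lambda$ form a set of bounded height; and the pole loci of $t$ and $\beta$ are finite. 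Hence infinitely many $\lambda_0$ meet all three requirements.

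With $\lambda_0$ fixed, Lemma~\ref{JL} yields a uniform integer $M$ such that $f_{\lambda_0}^n(x) - \beta_{\lambda_0}$ has at most $M$ irreducible factors over $L(\lambda_0)$ for every $n \geq 1$. To transfer this bound to $k$, I would write $f^n(x) - \beta = \prod_{i=1}^{N_n} P_{n,i}(x)$ with $P_{n,i} \in k[x]$ monic irreducible, and exploit the fact that $\mathcal{O}_{C,\lambda_0}$ is a DVR, hence integrally closed. Since the monic polynomial $f^n(x) - \beta$ has coefficients in $\mathcal{O}_{C,\lambda_0}$, each of its roots in $\overline{k}$ is integral over $\mathcal{O}_{C,\lambda_0}$, so the elementary symmetric functions in those roots---which are the coefficients of each monic factor $P_{n,i}$---also lie in $\mathcal{O}_{C,\lambda_0}$. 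Specializing at $\lambda_0$ then produces a genuine factorization $f_{\lambda_0}^n(x) - \beta_{\lambda_0} = \prod_{i=1}^{N_n} P_{n,i,\lambda_0}(x)$ into monic polynomials of the same degrees as the $P_{n,i}$, and in particular the number of $L(\lambda_0)$-irreducible factors of the left side is at least $N_n$. Therefore $N_n \leq M$ for all $n$, which is precisely eventual stability of $(f,\beta)$ over $k$.

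The delicate step is the first one: simultaneously arranging $v$-adic integrality of $t_{\lambda_0}$ for some $v \mid p$, non-periodicity of $\beta_{\lambda_0}$, and regularity. The key ingredient is Lemma~\ref{fromCS}(b), which is nontrivial: its proof routes through the Call--Silverman specialization theorem together with Baker's theorem that non-preperiodic points for non-isotrivial maps have positive canonical height. Granted that lemma, intersecting the unbounded-height set $t^{-1}(\overline{\Z})$ with the complement of the bounded-height exceptional set is straightforward. The lift in step two, once step one is secured, reduces to Gauss's lemma and a degree count, and should present no difficulty.
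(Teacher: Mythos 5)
Your proposal is correct and follows the same route as the paper: choose a specialization $\lambda_0$ with $t_{\lambda_0}$ an algebraic integer and $\beta_{\lambda_0}$ non-periodic (possible because the periodic-specialization locus has bounded height by Lemma~\ref{fromCS}(b) while integral values of $t$ of unbounded height pull back to points of unbounded height on $C$), apply Lemma~\ref{JL}, and transfer the factor bound back to $k$. The only difference is that you spell out the transfer step (integrality of the coefficients of the monic factors over the DVR $\mathcal{O}_{C,\lambda_0}$), which the paper leaves implicit; that detail is accurate.
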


\begin{proof}
We may choose a specialization $\lambda$ of $k$ to $\Qbar$ such that:
\begin{itemize}
\item[(i)]  $t_\lambda$ is an algebraic integer;  and 
\item[(ii)] $\beta_\lambda$ is not
  periodic under $f_\lambda$. 
\end{itemize}
Indeed,  for all but
  finitely many $z \in  \Qbar$, there is a $\lambda\in C(\Qbar)$ such that
  $t_\lambda = z$; furthermore, condition~(ii) is achieved for all points  $\lambda\in C(\Qbar)$ of sufficiently large height (by Lemma~\ref{FromCS}), while on the other hand, if the algebraic integer $t_\lambda$ has large height, then the point $\lambda$ must have large height (on $C$) as well.  Then, by  Lemma~\ref{JL}, the pair $(f_\lambda, \beta_\lambda)$ is eventually  stable over $L(\lambda)$, which implies that $(f, \beta)$ is  eventually stable over $k$.
\end{proof}

\section{Ramification and Galois theory}\label{Galois section}
Let $f(x) = x^q + t$ with $t \in K$.  In this section we define
\emph{Condition R} and \emph{Condition U} in terms of primes dividing
certain elements of $K$ related to the forward orbits of 0. In
Proposition~\ref{necessary} and~\ref{main Galois} we show that these
conditions control ramification in the extensions
$K(\beta) \subseteq K_n(\beta)$, with consequences for the Galois
theory of these extensions.  We begin with the following standard
lemma from Galois theory.

\begin{lem}\label{Galois}
Let $L_1,\dots,L_n$ and $M$ be fields all contained in some larger field. Assume that $L_1,\dots,L_n$ are finite extensions of $M$.
\begin{enumerate}
\item [(i)]  If $L_1,L_2$ are Galois over $M$ with $L_1\cap L_2=M$, then $L_1 L_2$ is Galois over $L_2$ and
$\Gal(L_1 L_2/L_2)\cong\Gal(L_1/M)$.
\item [(ii)] If $L_1,\dots,L_n$ are Galois over $M$ with $L_i\cap\prod_{j\neq i}L_j = M$ for each $i$, then $\Gal(\Pi_{i=1}^n L_i/M)\cong\prod_{i=1}^n\Gal(L_i/M)$.
\end{enumerate}
\end{lem}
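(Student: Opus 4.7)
The plan is to prove part (i) directly from the standard restriction-map argument for compositums, and then to deduce (ii) by induction on $n$.

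For (i), I would first note that $L_1 L_2$ is Galois over $L_2$: because $L_1/M$ is Galois, $L_1$ is the splitting field of some separable polynomial $P(x) \in M[x]$, and then $L_1 L_2$ is the splitting field of $P(x)$ viewed in $L_2[x]$. Next I would consider the restriction map
\[
\rho\colon \Gal(L_1 L_2 / L_2) \longrightarrow \Gal(L_1/M), \qquad \sigma \longmapsto \sigma|_{L_1},
\]
which is well-defined since $L_1$ is Galois over $M$ and hence stable under any automorphism fixing $M$. The map $\rho$ is injective because any $\sigma$ in the kernel fixes both $L_1$ and $L_2$, hence fixes the compositum $L_1 L_2$. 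By standard Galois correspondence, the image of $\rho$ equals $\Gal(L_1 / L_1 \cap L_2)$, which by hypothesis equals $\Gal(L_1/M)$. Thus $\rho$ is an isomorphism.

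For (ii), I would induct on $n$. The case $n=2$ is immediate: part (i) gives $\Gal(L_1 L_2/L_2) \cong \Gal(L_1/M)$, and combined with the short exact sequence
\[
1 \longrightarrow \Gal(L_1 L_2/L_2) \longrightarrow \Gal(L_1 L_2/M) \longrightarrow \Gal(L_2/M) \longrightarrow 1
\]
(which splits since the kernel is already naturally identified with $\Gal(L_1/M)$ acting trivially on $L_2$) we obtain the product decomposition. For the inductive step, set $L = \prod_{i=1}^{n-1} L_i$. The inductive hypothesis applies to $L_1,\dots,L_{n-1}$ because the disjointness assumption $L_i \cap \prod_{j \ne i} L_j = M$ for the full list implies the analogous condition after deleting $L_n$, using that $\prod_{j \ne i,\, j<n} L_j \subseteq \prod_{j \ne i} L_j$. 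Hence $\Gal(L/M) \cong \prod_{i=1}^{n-1} \Gal(L_i/M)$. The hypothesis with $i = n$ reads $L_n \cap L = M$, so part (i) applies to $L_n$ and $L$ and produces $\Gal(L_n L / M) \cong \Gal(L_n/M) \times \Gal(L/M)$, which gives the desired decomposition.

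There is no real obstacle here; the only thing to watch is the bookkeeping in the induction, specifically verifying that the pairwise-style disjointness condition passes to the sub-family $L_1,\dots,L_{n-1}$ and that it yields $L_n \cap (L_1 \cdots L_{n-1}) = M$ cleanly so that part (i) can be invoked at the last step.
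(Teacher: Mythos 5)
The paper states this lemma without proof, treating it as standard, so the only question is whether your argument is sound. Your part (i) is correct: the splitting-field argument for $L_1L_2/L_2$ being Galois, the injectivity of the restriction map, and the identification of its image with $\Gal(L_1/(L_1\cap L_2))$ are all fine. The one place where your write-up does not hold up as stated is the $n=2$ case of (ii), which is also the engine of your induction. You invoke the exact sequence $1\to\Gal(L_1L_2/L_2)\to\Gal(L_1L_2/M)\to\Gal(L_2/M)\to 1$ and assert it ``splits since the kernel is already naturally identified with $\Gal(L_1/M)$ acting trivially on $L_2$.'' But the kernel acts trivially on $L_2$ by its very definition, so this observation carries no content; it neither produces a splitting nor, even granting a splitting, rules out a nontrivial semidirect product. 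The direct-product conclusion therefore does not follow from what you wrote. The same issue recurs at the end of the inductive step, where you cite part (i) as producing $\Gal(L_nL/M)\cong\Gal(L_n/M)\times\Gal(L/M)$: part (i) only gives $\Gal(L_nL/L)\cong\Gal(L_n/M)$, and what you actually need there is precisely the two-field case of (ii).

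The repair is short and standard. Consider the joint restriction map $\theta\colon\Gal(L_1L_2/M)\to\Gal(L_1/M)\times\Gal(L_2/M)$, $\sigma\mapsto(\sigma|_{L_1},\sigma|_{L_2})$. It is injective because an automorphism fixing both $L_1$ and $L_2$ pointwise fixes the compositum, and it is surjective by the order count $[L_1L_2:M]=[L_1L_2:L_2]\,[L_2:M]=[L_1:M]\,[L_2:M]$, where the second equality is your part (i). (Alternatively, note that $\Gal(L_1L_2/L_1)$ and $\Gal(L_1L_2/L_2)$ are normal subgroups, since $L_1$ and $L_2$ are Galois over $M$, that they intersect trivially, and that their orders multiply to $[L_1L_2:M]$; hence the group is their internal direct product.) With this patch, the rest of your induction is correct: the verification that the disjointness hypothesis descends to the subfamily $L_1,\dots,L_{n-1}$, and that the case $i=n$ gives $L_n\cap(L_1\cdots L_{n-1})=M$ so the two-field case applies, is exactly the bookkeeping one needs.
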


Conditions R and U make use of the notion of good reduction 
of a map $f\in K(x)$ at a prime $\fp$. A polynomial
\[
f(x) = a_dx^d + a_{d-1}x^{d-1}+\dots + a_1x + a_0,
\]
has good reduction at $\fp$ if $v_\fp(a_d)=0$ and $v_\fp(a_i)\geq 0$ for $0\leq i\leq d-1$. See \cite{MortonSilverman} or \cite[Theorem 2.15]{SilvermanADS} for a more careful definition that also applies to rational functions. Clearly any $f$ has good reduction at all but finitely many $\fp$. The idea behind the definition is that if $f$ has good reduction at $\fp$, then $f$ commutes with the reduction mod $\fp$ map $\bar{\cdot}: \P^1(\Kbar)\to \P^1(\overline{k}_\fp)$. This is clear for polynomials (see \cite[Theorem 2.18]{SilvermanADS} for a proof for rational functions). We say that $f$ has good separable reduction at $\fp$ if the reduced map  $\bar{f}:\P^1(\overline{k}_\fp)\to\P^1(\overline{k}_\fp)$ is separable. 

\begin{defn}
  Let $\beta \in \Kbar$.  We say that a
  prime $\fp$ of $K(\beta)$ satisfies {\bf Condition R} at
  $\beta$ for $f$ and $n$ if the following hold:
\begin{enumerate}
\item[(a)] $f$ has good separable reduction at $\fp$;
\item[(b)]  $v_\fp(f^i(0) - \beta) = 0$ for all $0 \leq i < n$;
\item[(c)]  $v_\fp(f^n(0) - \beta) = 1$;
\item[(d)] $v_\fp(\beta) = 0$.
\end{enumerate}
\end{defn}

\begin{defn}
  Let $\beta \in \Kbar$.  We say
  that a prime $\fp$ of $K(\beta)$ satisfies {\bf Condition U} at
  $\beta$ for $f$ and $n$ if the following hold:
\begin{itemize}
\item[(a)] $f$ has good separable reduction at $\fp$;
\item[(b)] $v_\fp(f^i(0) - \beta) = 0$ for all $0 \leq i \leq n$;
\item[(c)] $v_\fp(\beta)=0$.
\end{itemize}
\end{defn}

\begin{prop}\label{necessary}
Let $\beta\in\Kbar$. Let $\p$ be a prime of $K(\beta)$ that satisfies
Condition U at $\beta$ for $f$ and $n$. 
Then $\fp$ is unramified in $K_n(\beta)$.
\end{prop}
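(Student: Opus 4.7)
The plan is to prove that the monic polynomial $g(x) := f^n(x) - \beta$, when reduced modulo $\fp$, is separable over the residue field $k_\fp$, and then to deduce unramifiedness from the standard fact that the splitting field of a monic integral polynomial with separable reduction is unramified. By definition $K_n(\beta)$ is the splitting field of $g$ over $K(\beta)$, so this is exactly what is required.

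First I would verify that $g \in \cO_\fp[x]$ is monic. Condition U(a) (good reduction of $f$) gives $v_\fp(t) \ge 0$; iterating, $f^n(x) \in \cO_\fp[x]$ is monic, and Condition U(c) ($v_\fp(\beta)=0$) then yields $g \in \cO_\fp[x]$ monic. The main step is the separability of $\bar g$. Since the residue field $k_\fp$ is naturally $\Qbar$, hence of characteristic $0$, the scalar $q$ remains nonzero mod $\fp$, so $\bar f'(x) = \bar q\, x^{q-1}$ vanishes only at $x=0$ in $\overline{k_\fp}$. By the chain rule,
\[
(\bar f^n)'(x) \;=\; \prod_{i=0}^{n-1} \bar f'\bigl(\bar f^i(x)\bigr).
\]
Suppose $\bar\alpha \in \overline{k_\fp}$ is a common root of $\bar g$ and $(\bar g)'$. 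Then $\bar f^n(\bar\alpha) = \bar\beta$, and some factor $\bar f'(\bar f^i(\bar\alpha))$, with $0 \le i \le n-1$, vanishes; hence $\bar f^i(\bar\alpha) = 0$. Applying $\bar f^{n-i}$ gives $\bar f^{n-i}(0) = \bar f^n(\bar\alpha) = \bar\beta$, i.e.\ $v_\fp(f^{n-i}(0) - \beta) > 0$ with $1 \le n-i \le n$. This contradicts Condition U(b), so $\bar g$ is separable.

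The conclusion is then routine local algebra. For any root $\alpha$ of $g$ in $\Kbar$, its minimal polynomial over $K(\beta)$ divides $g$, so by Gauss's lemma applied at the DVR $\cO_\fp$ it also lies in $\cO_\fp[x]$ and inherits separable reduction; hence $K(\beta)(\alpha)/K(\beta)$ is unramified at $\fp$. Since $K_n(\beta)$ is the compositum of the extensions $K(\beta)(\alpha)$ over the roots $\alpha$ of $g$, $\fp$ is unramified in $K_n(\beta)$. I do not foresee a substantive obstacle: Condition U is tailored precisely to rule out the postcritical collision $\bar f^j(0) = \bar\beta$ (for some $1 \le j \le n$) that is the only way a repeated root of $\bar g$ could arise, and the chain rule supplies exactly the needed link between such a collision and a double root.
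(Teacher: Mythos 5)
Your proof is correct, and it is essentially the argument the paper itself outsources: the paper's proof of this proposition is just a citation of \cite[Prop.\ 3.1]{BridyTucker}, whose content is exactly your chain-rule computation showing that Condition U forces $f^n(x)-\beta$ to have separable (and integral, monic) reduction at $\fp$, whence its splitting field $K_n(\beta)$ is unramified at $\fp$. So you have supplied, correctly and in full, the standard argument behind the reference rather than a genuinely different route.
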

\begin{proof}
This is the content of \cite[Proposition 3.1]{BridyTucker}. The proof in \cite{BridyTucker} is 
stated for $\beta\in K$, but works exactly the same if we allow $\beta\in\Kbar$ and replace $K$ with $K(\beta)$.
\end{proof}

\begin{prop}\label{main Galois}
Let $\beta\in\Kbar$. Suppose that $\fp$ is a prime of $K(\beta)$ that 
satisfies Condition R at $\beta$ for $n$ and that $f^n(x)
- \beta$ is irreducible over $K(\beta)$. Then 
\[ \Gal( K_{n}(\beta) / K_{n-1}(\beta)) \cong C_q^{q^{n-1}}.\]
Furthermore, $\fp$
does not ramify in $K_{n-1}(\beta)$ and does ramify in any field $E$ such that
$K_{n-1}(\beta) \subsetneq E \subset K_n(\beta)$.
\end{prop}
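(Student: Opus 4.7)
The plan is to use Condition R to pin down enough ramification to force $K_n(\beta)/K_{n-1}(\beta)$ to be the maximal iterated Kummer extension, and then read off inertia at the intermediate level. Condition R at $\beta$ for $n$ subsumes Condition U at $\beta$ for $n-1$, so Proposition~\ref{necessary} immediately gives that $\fp$ is unramified in $K_{n-1}(\beta)/K(\beta)$; this settles the first half of the final sentence. Setting $L := K_{n-1}(\beta)$, I would observe that every root $\alpha$ of $f^n(x) - \beta$ satisfies $\alpha^q = \gamma - t$, where $\gamma := f(\alpha)$ is a root of $f^{n-1}(x) - \beta$, so $K_n(\beta) = L\bigl(\sqrt[q]{\gamma - t} : \gamma \in f^{-(n-1)}(\beta)\bigr)$. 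Because $\Qbar \subset K$ provides the $q$-th roots of unity, this is an iterated Kummer extension whose degree is at most $q^{q^{n-1}}$ by the discussion in Section~\ref{wreath}; the goal is equality.

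The engine is the identity $f^k(0) = f^{k-1}(t)$ for $k \ge 1$, which yields
\[
f^n(0) - \beta \;=\; f^{n-1}(t) - \beta \;=\; \prod_\gamma (t - \gamma),
\]
the second equality because $f^{n-1}(x)-\beta$ is monic with these roots. By Condition R(c) this expression has $v_\fp = 1$, and since $\fp$ is unramified in $L$, every prime $\mathfrak{P}$ of $L$ above $\fp$ satisfies $\sum_\gamma v_{\mathfrak{P}}(\gamma - t) = 1$. Good reduction gives $v_{\mathfrak{P}}(t) \ge 0$, and if $v_{\mathfrak{P}}(\gamma) < 0$ then iterating $f$ forces $v_{\mathfrak{P}}(\beta) = v_{\mathfrak{P}}(f^{n-1}(\gamma)) = q^{n-1}v_{\mathfrak{P}}(\gamma) < 0$, contradicting Condition R(d); hence $v_{\mathfrak{P}}(\gamma - t) \ge 0$ for every root $\gamma$. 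Thus at $\mathfrak{P}$ exactly one root has $v_{\mathfrak{P}}(\gamma - t) = 1$ and all the others give $v_{\mathfrak{P}}(\gamma - t) = 0$. Since $f^{n-1}(x) - \beta$ is irreducible over $K(\beta)$, $\Gal(L/K(\beta))$ acts transitively on its roots; translating $\mathfrak{P}$ by a Galois element carrying the distinguished root to $\gamma_i$ produces, for each $i$, a prime $\mathfrak{P}_i$ above $\fp$ with $v_{\mathfrak{P}_i}(\gamma_i - t) = 1$ and $v_{\mathfrak{P}_i}(\gamma_j - t) = 0$ for $j \neq i$.

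These ``diagonal'' valuations give Kummer independence instantly: in any relation $\prod_i (\gamma_i - t)^{a_i} = z^q$ over $L$, applying $v_{\mathfrak{P}_i}$ forces $a_i \equiv 0 \pmod{q}$ for each $i$. Kummer theory then gives $[K_n(\beta) : L] = q^{q^{n-1}}$, matching the wreath-product upper bound and identifying $\Gal(K_n(\beta)/L) \cong C_q^{q^{n-1}}$. For the intermediate-field ramification claim, I would study the inertia subgroup $I_i \subset G := \Gal(K_n(\beta)/L)$ of a prime $\mathfrak{Q}$ of $K_n(\beta)$ above $\mathfrak{P}_i$: for each $j \neq i$, since $v_{\mathfrak{P}_i}(\gamma_j - t) = 0$ and the residue characteristic is zero, the extension $L(\sqrt[q]{\gamma_j - t})/L$ is unramified at $\mathfrak{P}_i$, forcing $I_i$ into the $i$-th coordinate copy $C_q^{(i)} \subset G$; conversely, $v_{\mathfrak{P}_i}(\gamma_i - t) = 1$ coprime to $q$ makes $L(\sqrt[q]{\gamma_i - t})/L$ totally ramified of degree $q$ at $\mathfrak{P}_i$, forcing $I_i = C_q^{(i)}$. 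Since the $C_q^{(i)}$ generate $G$, no proper subgroup $H = \Gal(K_n(\beta)/E) \subsetneq G$ can contain all of them, and some $\mathfrak{P}_i$ must ramify in $E/L$. The delicate step I expect to require the most care is the valuation bookkeeping that promotes $v_\fp(f^n(0)-\beta) = 1$ to the ``one root absorbs the entire valuation'' statement at every $\mathfrak{P}$ above $\fp$; once that is secured, the rest reduces to standard Kummer theory and inertia computations.
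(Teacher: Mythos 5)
Your proof is correct, and it reaches the conclusion by a route that differs in its technical execution from the paper's. The paper works with the decomposition $f^n(x)-\beta=\prod_i\bigl(f(x)-z_i\bigr)$ and the splitting fields $M_i$ of the factors over $K_{n-1}(\beta)$: it reduces $f^n(x)-\beta$ mod $\fp$, writes the reduction as $x^q h(x)$ with $h$ separable, deduces that exactly one factor $f(x)-z_i$ acquires a repeated root mod a prime $\fq$ above $\fp$, computes that $\fq$ is totally ramified in that $M_i$ and unramified in the others, and then invokes Lemma~\ref{Galois} (linear disjointness of Galois extensions with disjoint ramification) to assemble $C_q^{q^{n-1}}$; along the way it must separately argue that each $\Gal(M_i/K_{n-1}(\beta))\cong C_q$ by exhibiting an order-$q$ inertia subgroup. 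You instead exploit the explicit Kummer description $K_n(\beta)=L\bigl(\sqrt[q]{\gamma-t}\bigr)$ and the resultant identity $f^n(0)-\beta=f^{n-1}(t)-\beta=\prod_\gamma(t-\gamma)$, which together with R(c), R(d) and good reduction localizes the entire valuation $1$ onto a single factor $\gamma-t$ at each prime $\mathfrak{P}\mid\fp$ of $L$; Galois transitivity then produces the diagonal family of primes $\mathfrak{P}_i$, and independence in $L^\times/(L^\times)^q$ drops out of a one-line valuation computation. The two arguments rest on the same arithmetic input — the single prime with $v_\fp(f^n(0)-\beta)=1$ distributed across the conjugate factors — but your packaging buys a shorter path to the group structure (Kummer theory replaces both the mod-$\fp$ root analysis and Lemma~\ref{Galois}), while the paper's formulation via Conditions R/U and inertia generation is the one that transports directly to the multi-point setting of Proposition~\ref{final Galois}, where the linear-disjointness lemma is reused. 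Your inertia computation for the intermediate-field claim is essentially identical to the paper's (the groups $I_i=C_q^{(i)}$ generate $\Gal(K_n(\beta)/K_{n-1}(\beta))$, so every proper intermediate extension ramifies over some $\mathfrak{P}_i$, hence over $\fp$). Two small points worth making explicit in a final write-up: the irreducibility of $f^{n-1}(x)-\beta$ over $K(\beta)$ (needed for the transitivity step) follows from that of $f^n(x)-\beta$ by composing any putative factorization with $f$; and $\gamma\neq t$ for every root $\gamma$, since otherwise $f^n(0)=\beta$, contradicting R(c).
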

\begin{proof}
Observe that Condition R at $\beta$ for $n$ implies Condition U at $\beta$ for $n-1$. 
By Proposition~\ref{necessary}, $\fp$ does not ramify in $K_{n-1}(\beta)$.

Let $\bar{z}$ denote the image of $z\in \P^1(\Kbar)$ under the reduction mod $\fp$ map, which is well defined 
as long as $v_\fp(z)\geq 0$. Consider the map $\bar{f}:\P^1(\overline{k}_\fp)\to\P^1(\overline{k}_\fp)$ that comes from reducing $f$ at $\fp$, 
and recall that Condition R assumes that $f$ has good reduction at $\fp$. The unique critical point of $\bar{f}$ 
is $0\in k_\fp$. By (b) of Condition R, we see that $\bar{f}^{n-1}(x)-\bar{\beta}$ has no repeated roots. By (c) of 
Condition R, we see that $0\in\bar{f}^{-n}(\beta)$, and $0$ is totally ramified over $\bar{f}(0)=\bar{t}$ (in the sense 
of $\bar{f}$ as a morphism of $\P^1(\overline{k}_\fp)$).
So $\bar{f}^n(x)-\bar{\beta}$ has $0$ as a root of multiplicity $q$, and has no other repeated roots. So we may write
\begin{equation}\label{1}
\bar{f}^n(x)-\bar{\beta}=x^q h(x)
\end{equation}
where $h(0)\neq 0$ and $h$ has distinct roots.

Let $z_1,\dots,z_{q^{n-1}}$ be the roots of $f^{n-1}(x)-\beta$. We have the factorization
\begin{equation}\label{2}
f^n(x)-\beta = \prod_{i=1}^{q^{n-1}}\left(f(x)-z_i\right)
\end{equation}
in $K_{n-1}(\beta)[x]$. For each $i$, let $L_i$ denote the splitting field of $f(x)-z_i$ over $K(z_i)$, and 
let $M_i$ denote the splitting field of $f(x)-z_i$ over $K_{n-1}(\beta)$. Note that $M_i=K_{n-1}(\beta)\cdot L_i$. 
By Capelli's Lemma, for each $i$, $f(x)-z_i$ is irreducible over $K(z_i)$. Also note that the $z_i$ are all distinct mod $\fp$, 
because $\bar{f}^{n-1}(x)-\bar{\beta}$ has no repeated roots.

Let $\fq$ be a prime of $K_{n-1}(\beta)$ that lies over $\fp$. 
By~\eqref{1} and~\eqref{2}, there is precisely one $i\in\{1,\dots,q^{n-1}\}$ for which $f(x)-z_i$ has a repeated root mod $\fq$, 
and this root is repeated with multiplicity $q$. 
For this $i$, let $\fm$ be a prime of $M_i$ that lies over $\fq$. Then we have
\begin{equation}\label{3}
f(x)-z_i\equiv x^q\pmod{\fm}.
\end{equation}
By (c) of Condition R we have $v_\fq(f(0)-z_i)=v_\fp(f(0)-z_i)=1$, as $\fq$ is unramified over $\fp$. 
By~\eqref{3} we compute $v_\fm(f(0)-z_i)=q$. We conclude that $\fq$ is totally ramified in $M_i$ 
with $e(\fm/\fq)=q$, and for all $j\neq i$, $\fq$ does not ramify in $M_j$.

Now, $\fp$ is unramified in $K(z_i)$ (which is a subextension of $K_{n-1}(\beta)$), so $\fm\cap L_i$ ramifies over 
$\fm\cap K(z_i)$ with ramification index $q$. Using that $f(x)-z_i$ is irreducible over $K(z_i)$, $\Gal(L_i/K(z_i))$ contains 
the order $q$ inertia subgroup $I(\fm\cap L_i/\fm\cap K(z_i))$ and is therefore cyclic of order $q$ (as we know 
it is a subgroup of $C_q$). 
Taking the base change by $K_{n-1}(\beta)$, $\Gal(M_i/K_{n-1}(\beta))$ is a normal subgroup of $C_q$ that contains 
the order $q$ inertia subgroup $I(\fm/\fq)$, so $\Gal(M_i/K_{n-1}(\beta))\cong C_q$ as well.

As $f^n(x)-\beta$ is irreducible over $K(\beta)$, it follows easily that $f^{n-1}(x)-\beta$ is irreducible over $K(\beta)$ as well. 
Therefore all of the $z_i$ are Galois-conjugate. That is, for any $z_j\neq z_i$, there exists $\sigma\in G_{n-1}(\beta)$ such that 
$\sigma (z_i)=z_j$. Applying $\sigma$ to $\fq$, we obtain a prime $\sigma(\fq)$ of $K_{n-1}(\beta)$ that ramifies in $M_j$ with 
ramification index $q$ and does not ramify in $M_k$ for any $k\neq j$. Repeating the same argument as above, it follows that 
$\Gal(M_j/K_{n-1}(\beta))\cong C_q$. The disjointness of ramification shows that for each $j$ we have
\begin{equation}
M_j\cap \left(\prod_{k\neq j} M_j \right)= K_{n-1}(\beta).
\end{equation}
By Lemma~\ref{Galois}, we conclude that $\Gal(K_n(\beta)/K_{n-1}(\beta))\cong C_q^{q^{n-1}}$. To see the statement about intermediate fields, let $\fm_j$ be a prime of $M_j$ lying over $\sigma (\fq)$. Then the inertia group $I(\fm_j/\sigma(\fq))$ extends 
to an inertia group $I(\fm_j'/\sigma(\fq))$ for a prime $\fm_j'$ of $K_n(\beta)$ lying over $\fm_j$. The group $I(\fm_j'/\sigma(\fq))$ 
restricts to the identity on $M_k$ for $k\neq j$. Therefore $\Gal(K_n(\beta)/K_{n-1}(\beta))$ is generated by inertia groups of 
the form $I(\fm_j'/\sigma(\fq))$ for various $\sigma\in G_{n-1}(\beta)$. It follows that any intermediate field $E$ lying strictly between 
$K_{n-1}(\beta)$ and $K_n(\beta)$ is ramified over some $\sigma(\fq)$, and thus ramified over $\fp$.
\end{proof}

Before stating the last result from Galois theory we need, we need a
little notation. 

\begin{defn}
  Let $\ba = (\alpha_1, \dots, \alpha_s) \in \Kbar^{s}$,
  and   let $\bF = (f_1, \dots, f_s)$, where for each $i$, $f_i=x^q +
  t_i$ for some $t_i\in K\setminus\Qbar$.   We
  define $K_{n}(\bF,\ba)$ to be $ \prod_{i=1}^s K_n(f_i,\alpha_i)$.
\end{defn}

With this notation we have the following.

\begin{prop}\label{final Galois}
  Let $\ba = (\alpha_1, \dots, \alpha_s) \in L^{s}$ for $L$ a
  finite extension of $K$.  Suppose there exist primes
  $\fp_1,\dots,\fp_s$ of $L$ such that
\begin{itemize}
\item[(a)] $\fp_i \cap K(\alpha_i)$ satisfies Condition R at
$\alpha_i$ for $f_i$ and $n$;
\item[(b)] $\fp_i \cap K(\alpha_j)$ satisfies Condition U at
$\alpha_j$ for $f_j$ and $n$ for all $j \not= i$;
\item[(c)] $\fp_i \cap K(\alpha_i)$ does not ramify in $L$; and
\item[(d)] $f_i^n(x) - \alpha_i$ is irreducible over $K(\alpha_i)$ for
  $i = 1, \dots, s$.
\end{itemize}
Then  $\Gal(K_{n}(\bF,\ba)/ K_{n-1}(\bF,\ba)) \cong C_q^{s q^{n-1}}$.
Furthermore, there is no field $E$ with $K_{n-1}(\bF,\ba) \subsetneq E \subset
K_{n}(\bF,\ba)$ that is unramified over $K_{n-1}(\bF,\ba)$.
\end{prop}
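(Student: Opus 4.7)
I would apply Lemma~\ref{Galois}(ii) with base field $M := K_{n-1}(\bF,\ba) = \prod_{i=1}^s K_{n-1}(f_i,\alpha_i)$ and intermediate fields $L_i := K_n(f_i,\alpha_i) \cdot M$, so that $K_n(\bF,\ba) = \prod_{i=1}^s L_i$. Write $N_i := K_n(f_i,\alpha_i)$ and $M_i := K_{n-1}(f_i,\alpha_i)$; hypothesis (d) together with Proposition~\ref{main Galois} applied using (a) gives $\Gal(N_i/M_i) \cong C_q^{q^{n-1}}$, with $P_i := \fp_i \cap K(\alpha_i)$ unramified in $M_i$ and ramifying in every strictly intermediate field between $M_i$ and $N_i$. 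Each $L_i/M$ is Galois with group embedding into $\Gal(N_i/M_i)$ by restriction, the embedding being an isomorphism precisely when $N_i \cap M = M_i$. To conclude via Lemma~\ref{Galois}(ii) that $\Gal(K_n(\bF,\ba)/M) \cong C_q^{sq^{n-1}}$, I need both this isomorphism and the disjointness $L_i \cap \prod_{j \neq i} L_j = M$ for each $i$.

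\textbf{Ramification analysis.} Both conditions follow from three facts about each $\fp_i$, all proved by base-changing along the extension $L/K(\alpha_i)$ (unramified at $P_i$ by hypothesis~(c)) and using that ramification indices are preserved by unramified base change. (R1) $\fp_i$ is unramified in $L \cdot M$ over $L$, since $P_i$ is unramified in $M_i$ (Proposition~\ref{main Galois}) and, by (b) and Proposition~\ref{necessary}, $\fp_i \cap K(\alpha_j)$ is unramified in $N_j \supseteq M_j$ for each $j \neq i$. (R2) $\fp_i$ is unramified in $L \cdot \prod_{j \neq i} N_j$ over $L$, by the same argument applied to the $N_j$. (R3) $\fp_i$ ramifies in $L \cdot F$ over $L$ whenever $M_i \subsetneq F \subseteq N_i$, because $P_i$ ramifies in $F$ by the ``furthermore'' clause of Proposition~\ref{main Galois}. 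Combining (R1) with (R3) shows $N_i \cap M = M_i$: any larger intermediate $F = N_i \cap M$ would sit inside $M$ yet ramify in $\fp_i$ over $L$, contradicting (R1). Combining (R2) with (R3), via the Galois correspondence inside $L_i/M$, yields the disjointness $L_i \cap \prod_{j \neq i} L_j = M$. Lemma~\ref{Galois}(ii) then gives the desired isomorphism.

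\textbf{Furthermore part and main obstacle.} For the statement that no intermediate field $M \subsetneq E \subseteq K_n(\bF,\ba)$ is unramified over $M$, I would show that the subgroup of $\Gal(K_n(\bF,\ba)/M) \cong \prod_i \Gal(L_i/M)$ generated by inertia groups is the whole group. By (R2), the inertia groups at primes of $L \cdot M$ above $\fp_i$ (and its Galois conjugates over $K$) are supported in the $i$-th factor $\Gal(L_i/M) \cong \Gal(N_i/M_i)$; via the base-change isomorphism these correspond to the inertia groups appearing in the proof of Proposition~\ref{main Galois}, which were shown to generate $\Gal(N_i/M_i)$. Taking the union over $i$ generates the entire direct product, so the maximal unramified subextension of $K_n(\bF,\ba)/M$ is $M$ itself. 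The main technical obstacle is the bookkeeping for ramification in compositums of fields defined over different base fields $K(\alpha_i)$: hypotheses (a), (b), (c) describe ramification at different primes of different base fields, and the invariance of ramification under unramified base change, enabled by hypothesis~(c), is the linchpin that synchronizes these pictures over the common extension $L$.
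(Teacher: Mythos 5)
Your proposal is correct and follows essentially the same route as the paper's proof: apply Proposition~\ref{main Galois} to each pair $(f_i,\alpha_i)$ via hypotheses (a) and (d), use Condition U (Proposition~\ref{necessary}) together with (c) to see that $\fp_i$ is unramified in $L$, in $K_{n-1}(\bF,\ba)$, and in the towers attached to $j\neq i$, deduce the disjointness statements from the fact that $\fp_i$ ramifies in every field strictly between $K_{n-1}(f_i,\alpha_i)$ and $K_n(f_i,\alpha_i)$, and conclude with Lemma~\ref{Galois}. The only differences are cosmetic: you obtain $L_i\cap\prod_{j\neq i}L_j=K_{n-1}(\bF,\ba)$ via the intermediate-field correspondence where the paper uses a short degree count, and you spell out the inertia-generation argument for the ``furthermore'' clause, which the paper leaves implicit.
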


\begin{proof}
Choose any $i$ with $1\leq i\leq s$. By Proposition~\ref{main Galois}, we have
\[\Gal(K_n(f_i,\alpha_i)/K_{n-1}(f_i,\alpha_i))\cong C_q^{q^{n-1}}.\]
Further, $\fp_i$ does not ramify in $K_{n-1}(f_i,\alpha_i)$ and does ramify in every field $E$ with 
$K_{n-1}(f_i,\alpha_i)\subsetneq E\subseteq K_n(f_i,\alpha_i)$. Let $L_i=K_n(f_i,\alpha_i)\cdot K_{n-1}(\bF,\ba)$. By (b), (c), and Proposition~\ref{necessary}, 
$\fp_i$ does not ramify in $K_{n-1}(\bF,\ba)$ or in $L_j$ for $j\neq i$. Therefore 
\[K_n(f_i,\alpha_i)\cap \prod_{j\neq i} L_j= K_{n-1}(f_i,\alpha_i)\] 
and \[K_n(f_i,\alpha_i)\cap K_{n-1}(\bF,\ba)=K_{n-1}(f_i,\alpha_i).\]
By Lemma~\ref{Galois} and the fact that $L_i\cdot\prod_{j\neq i} L_j=K_{n}(f_i,\alpha_i)\cdot \prod_{j\neq i} L_j$,
\begin{align*}
 \Gal\left(L_i\cdot \prod_{j\neq i}L_j /\prod_{j\neq i}L_j\right) & \cong\Gal(L_i/K_{n-1}(\bF,\ba))\\
 & \cong  \Gal(K_n(f_i,\alpha_i)/K_{n-1}(f_i,\alpha_i))\\
 & \cong C_q^{q^{n-1}}.
 \end{align*}
 Therefore we have
 \[ 
 q^{q^{n-1}}=\left[L_i:K_{n-1}(\bF,\ba)\right]\geq 
 \left[ L_i:L_i\cap  \prod_{j\neq i} L_j\right]\geq 
  \left[ L_i\cdot \prod_{j\neq i} L_j :  \prod_{j\neq i} L_j\right] =  q^{q^{n-1}}.
 \]
 We conclude that $L_i\cap \prod_{j\neq i} L_j=K_{n-1}(\bF,\ba)$. Applying Lemma~\ref{Galois} again 
 and using that $L_i\cdot \prod_{j\neq i} L_j = K_{n-1}(\bF,\ba)$, we are done.
\end{proof}


\section{Proof of Main Theorems}\label{main proof}

The proofs of the main theorems combine the preliminary arguments from 
throughout the paper with the following proposition, which uses height arguments to 
produce primes with certain ramification behavior in $K_n(\beta)$. 
Recall the definitions of Condition R and Condition U from Section \ref{Galois section}.
\begin{prop}\label{final prop}
  Let $K$ be the function field of a smooth, projective curve defined
  over $\Qbar$.  For $i = 1, \ldots, s$, let $f_i(x) =x^q + t_i \in K[x]$, where $t_i $ is not
  in the constant field of $K$.  Let $L$ be a finite extension of $K$.
  Let $\alpha_1, \dots, \alpha_s$ be distinct elements of $L$ such
  that for all $\ell$, the point $\alpha_\ell$ is not postcritical for
  $f_\ell$, and for any $i \not= j$, the point
  $(\alpha_i, \alpha_j) \in \A^2$ is not on a curve that is periodic
  under the action of $(x,y) \mapsto (f_i(x), f_j(y))$.  Then, for all
  sufficiently large $n$, there exist primes $\fp_1,\dots,\fp_s$ of
  $L$ such that
\begin{itemize}
\item[(a)] for each $i$, we have that $\fp_i \cap K(\alpha_i)$ satisfies Condition R at
$\alpha_i$ and $f_i$ for $n$;
\item[(b)] for each $i\ne j$, we have that $\fp_i \cap K(\alpha_j)$ satisfies Condition U at
$\alpha_j$ and $f_j$ for $n$;
\item[(c)] $\fp_i \cap K(\alpha_i)$ does not ramify in $L$.   
\end{itemize}
\end{prop}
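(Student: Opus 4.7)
The plan is, for each fixed $i$, to exhibit a prime $\fq_i$ of $K(\alpha_i)$ satisfying Condition R at $\alpha_i$ and $f_i$ for $n$ that is unramified in $L$, and then to select a prime $\fp_i$ of $L$ above $\fq_i$ for which Condition U holds at $\alpha_j, f_j$ for every $j \ne i$. The three workhorses are the Roth-$abc$ bound (Lemma~\ref{from-Roth}), the diagonal intersection bound (Lemma~\ref{from-5.1}), and the finite-GCD result (Proposition~\ref{FunctionFinite}). Throughout, we use that since each $f_\ell = x^q + t_\ell$ is non-isotrivial ($t_\ell \notin \Qbar$), the critical point $0$ is not preperiodic for $f_\ell$ (else $t_\ell \in \Qbar$), so $h_{f_\ell}(0) > 0$.

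Working over $K(\alpha_i)$, which is again the function field of a smooth projective curve over $\Qbar$, Lemma~\ref{from-Roth} with $\gamma = 0$ and $\beta = \alpha_i$ (valid since $\alpha_i$ is not postcritical, hence $\alpha_i \notin \O_{f_i}(0)$) produces at least $(q-\epsilon) q^{n-1} h_{f_i}(0) + C_\epsilon$ primes $\fq$ of $K(\alpha_i)$ with $v_\fq(f_i^n(0) - \alpha_i) = 1$. Assuming in addition that $\alpha_i$ is not periodic for $f_i$ (as in the applications), Lemma~\ref{from-5.1} with $\beta_1 = \beta_2 = \alpha_i$ bounds by $\delta q^n h_{f_i}(0) + O_\delta(1)$ the primes $\fq$ at which $v_\fq(f_i^m(0) - \alpha_i) > 0$ for some $0 < m < n$ simultaneously with $v_\fq(f_i^n(0) - \alpha_i) > 0$. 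Taking $\delta$ sufficiently small and excluding the finitely many primes where some $\alpha_\ell$ is not a unit, some $t_\ell$ fails $v(t_\ell) = 0$ (needed for good separable reduction of $f_\ell$ in characteristic zero), or which ramify in $L$, leaves a pool of order $q^n$ candidate primes $\fq_i$ fulfilling Condition R and item (c) of the Proposition.

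For each such $\fq_i$, Condition U at $\fp_i \cap K(\alpha_j)$ for $\fp_i \mid \fq_i$ in $L$ amounts to $v_{\fp_i}(f_j^k(0) - \alpha_j) = 0$ for all $0 \le k \le n$ and $j \ne i$; the $k=0$ case contributes only finitely many excluded primes. For each pair $(i,j)$ with $t_i/t_j$ not a $(q-1)$-st root of unity, Proposition~\ref{FunctionFinite} applied over $L$ with $c_1 = \alpha_i$, $c_2 = \alpha_j$ (using $f_\ell^m(0) \ne \alpha_\ell$ by non-postcriticality) yields that only finitely many primes of $L$ have $v(f_i^m(0) - \alpha_i) > 0$ and $v(f_j^k(0) - \alpha_j) > 0$ for some $m, k \ge 1$. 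In the remaining case $t_i = \xi t_j$ with $\xi^{q-1} = 1$, the linear map $\phi(x) = \xi^{-1} x$ conjugates $f_i$ to $f_j$, whence $f_i^n(0) = \xi f_j^n(0)$; moreover, for each $k \ge 0$ the curve $\{x = \xi f_j^k(y)\}$ is fixed by $(f_i, f_j)$, and the periodic-curve hypothesis forces $\xi^{-1} \alpha_i \notin \O_{f_j}(\alpha_j) \cup \{\alpha_j\}$. Lemma~\ref{from-5.1} applied to $f_j$ with $\beta_1 = \alpha_j$, $\beta_2 = \xi^{-1}\alpha_i$ then bounds the relevant bad primes by $\delta' q^n h_{f_j}(0) + O_{\delta'}(1)$; the $k = n$ sub-case is handled separately by noting that $\alpha_j \ne \xi^{-1}\alpha_i$ (again from the periodic-curve hypothesis), giving only finitely many bad primes there.

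Summing the finitely many contributions from Case A and the $O(\delta' q^n)$ contributions from Case B across all $j \ne i$, we choose $\delta, \delta'$ small enough that the total bad count is dominated by the pool constructed in the previous step. Thus for all sufficiently large $n$, some candidate $\fq_i$ admits a prime $\fp_i$ of $L$ above it that also satisfies the Condition U requirements, and $\fp_1, \ldots, \fp_s$ are built one at a time in this way. The main obstacle is the exceptional case $t_i = \xi t_j$ with $\xi^{q-1} = 1$: Proposition~\ref{FunctionFinite} does not apply there, and one must extract from the periodic-curve hypothesis the precise non-orbit statement $\xi^{-1}\alpha_i \notin \O_{f_j}(\alpha_j)$ needed to invoke Lemma~\ref{from-5.1} as a substitute, then verify that the resulting (growing) bad count can still be absorbed by the Roth-$abc$ lower bound.
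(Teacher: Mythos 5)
Your proposal is correct and follows essentially the same route as the paper: a Roth-$abc$ lower bound (Lemma~\ref{from-Roth}) on primes with $v_\fp(f_i^n(0)-\alpha_i)=1$, from which one subtracts the $O(\delta q^n)$ bad primes counted by Lemma~\ref{from-5.1} (for pairs with conjugate $f_i,f_j$, using the periodic-curve hypothesis to verify the non-orbit condition) and the finitely many bad primes supplied by Proposition~\ref{FunctionFinite} (for non-conjugate pairs). The only cosmetic difference is that the paper first changes variables so that $t_i=\xi t_j$ with $\xi^{q-1}=1$ forces $t_i=t_j$, whereas you carry the conjugation $\phi(x)=\xi^{-1}x$ explicitly through the exceptional case.
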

\begin{proof}
For any $1\leq i\leq s$, let $\mathcal{A}_i(n)$ be the set of primes 
$\fp$ of $L$ such that (a), (b), and (c) hold. If a prime $\fp$ 
of $L$ satisfies condition R or condition U at $\alpha_i$ for $n$, then it is easy 
to see that the prime $\fp\cap K(\alpha_i)$ of $K(\alpha_i)$ also satisfies condition R or 
condition U at $\alpha_i$ for $n$. Therefore we will establish Conditions R and U for primes of $L$ rather than 
primes of the various $K(\alpha_i)$, which will make the argument less cumbersome to state. 
Thus all sums below are indexed by primes of $\fo_L$ as in Section~\ref{heights}.

There are only finitely many primes $\fp$ of $\fo_L$ for which some
$f_i$ does not have good separable reduction at $\p$,
$v_\fp(\alpha_i)\neq 0$ for some $i$, or $\fp\cap K(\alpha_i)$
ramifies in $L$ for some $\alpha_i$. The contributions of these primes
to our estimates will be absorbed into the constant term $C_\delta'$
at the end of the proof.

Now, note that after changing variables, we may assume that for any
$i, j$, we have $(t_i/t_j)^{q-1} = 1$ if and only if $t_i =
t_j$.  Choose any $i,j$ with $1\leq i,j\leq s$ such that $t_i\neq t_j$. 
By Proposition \ref{FunctionFinite}, there are at
most finitely many primes $\fp$ such that there are $m,n$ such that
\begin{equation} \label{diff-map}
  \min(v(f_i^m(0) - \alpha_i), v(f_j^n(0) - \alpha_j)) > 0.
\end{equation}
As above, the contributions of these primes
to our estimates will be absorbed into the constant term $C_\delta'$
at the end of the proof.

Now, take any $i$ and $j$ with $1\leq i,j\leq s$ (and possibly $i=j$)
such that $t_i = t_j$.  Note that when $t_i = t_j$, the curves
$(z, f_j^m(z))$ and $(f_i^m(z), z)$ are periodic under $(f_i, f_j)$ so
we may assume that there is no $m$ such that
$\alpha_i = f_j^m(\alpha_j)$ or $\alpha_j = f_i^m(\alpha_i)$ for any
$m \geq 0$, so we may use Lemma \ref{from-5.1}.  Let $\cX(n)$ be the set of primes $\fp$ with
$\min(v_\fp(f_i^n(0)-\alpha_j), v_\fp(f_i^m(0)-\alpha_i))>0$ for some
$1\leq m\leq n-1$. By Lemma \ref{from-5.1} with $\gamma=0$,
$\beta_1=\alpha_i$, and $\beta_2=\alpha_j$, for any $\delta > 0$ we have
\begin{equation}\label{0 earlier}
\# \cX(n) \leq \delta q^n h_{f_i}(0) + O_\delta(1),
\end{equation}
 Furthermore, if $j\neq i$, then the set of primes $\fp$ such that
 $\min(v_\fp(f_i^n(0)-\alpha_i),v_\fp(f_i^n(0)-\alpha_j))>0$ is a
 finite set depending only on $\alpha_i$ and $\alpha_j$ (and not on
 $n$), because $\alpha_i\equiv\alpha_j\pmod{\fp}$ for such $\fp$, so
\begin{equation}\label{0 same}
\#\left\{\fp\colon \min(v_\fp(f_i^n(0)-\alpha_i),v_\fp(f_i^n(0)-\alpha_j))>0\right\}  = O_\delta(1).
\end{equation}

Now, fix an $i$.  By Lemma \ref{from-Roth}, for any $\delta>0$ there
is a constant $C_\delta$ such that
\begin{equation}\label{starting point}
\# \left\{\fp\colon v_\fp(f_i^n(0)-\alpha_i)=1\right\} \geq (q-\delta)q^{n-1}h_{f_i}(0) + C_\delta.
\end{equation}
We subtract out \eqref{0 earlier}; for all such $j$ such that
$j\neq i$ we subtract \eqref{0 same} as well.  This sieves out all
$\fp\notin\mathcal{A}_i(n)$ from the sum in \eqref{starting point},
and we have
\begin{equation}\label{final equation}
\# \mathcal{A}_i(n)\geq q^n h_{f_i}(0)(1- s q
\delta - \delta/q) +C_\delta'
\end{equation}
where $C_\delta'$ is a constant obtained by combining all the $O_\delta(1)$ terms. For sufficiently large $n$, we can choose some $\delta$ to make the right hand side of \eqref{final equation} 
positive. Repeating this process for each $i$, we are done.
\end{proof}

\begin{proof}[Proof of Theorem \ref{stable-theorem}]
We know that $K$ is the function field of a projective, smooth, irreducible curve $C$ defined over $\Qbar$; so, we let $L$ be a number field such that $C$ is a geometrically irreducible curve defined over $L$. Then we let $k$ be the function field $L(C)$; at the expense of replacing $k$ by a finite extension, we may assume that $t,\beta\in k$.  
  By Proposition \ref{stable1}, the pair $(f,\beta)$ is eventually
  stable over $k$.   It will suffice to show that 
  $$\Qbar \cap \left(\bigcup_{n=1}^\infty k(f^{-n}(\beta))\right)\text{ is a finite extension of }L.$$ 
Since $(f,\beta)$ is eventually stable over $k$, by Capelli's Lemma there exists an
  $m$ such that $f^n(x) - \alpha_i$ is irreducible over $k(\alpha_i)$
  for all $\alpha_i$ such that $f^m(\alpha_i) = \beta$ and all $n\geq m$ 
  (see~\cite[Prop 4.2]{BT2} for a proof of this fact).  Applying
  Propositions \ref{final Galois} and \ref{final prop}, we see then that there is an integer $n_1$
  such that for all $n > n_1$, the field $k_n(\beta):=k(f^{-n}(\beta))$ contains no
  nontrivial extensions of $k_{n-1}(\beta)$ that are unramified
  over $k_{n-1}(\beta)$.  Let $\gamma$ be any element of
  $\Qbar \cap k_{\ell}(\beta)$ for some $\ell$.  Let $N$ be minimal among all
  integers such that $\gamma \in k_N(\beta)$.  Since
  $k_{N-1}(\beta)(\gamma)$ is unramified over
  $k_{N-1}(\beta)$, it follows that $N \leq n_1$.  Hence
  $\Qbar \cap \bigcup_{n=1}^\infty k(f^{-n}(\beta))$ is a finite extension of $L$, as
  desired. 
\end{proof}

\begin{proof}[Proof of Theorem \ref{p-theorem}]
 In Proposition~\ref{necessary conditions} we have already proved the conditions are necessary. Therefore assume 
 that $\beta$ is not postcritical nor periodic for $f$. 
 By Theorem \ref{stable-theorem}, the pair $(f,\beta)$ is eventually
 stable.  Again using Capelli's Lemma, there is some $m$ such that for all $\alpha\in f^{-m}(\beta)$ and for all $n\geq 1$, 
 $f^n(x)-\alpha$ is irreducible over $K_n(\beta)$. By Propositions~\ref{final Galois} and ~\ref{final prop}, 
 there exists $n_0$ such that for all $n\geq n_0$, we have
 \[\Gal(K_n(\beta)/K_{n-1}(\beta))\cong C_q^{q^{n-1}}.\]
 By Proposition~\ref{indexprop}, we are done.
\end{proof}

 \begin{proof} [Proof of Theorem \ref{disjoint-theorem}]
   By Theorem \ref{stable-theorem}, each pair $(f_i, \alpha_i)$ is
   eventually stable.  Arguing as in the proof of Theorem
   \ref{stable-theorem}, we let $\gamma$ in
   $M_i \cap \prod_{j \not= i} M_j$.  Let $N$ be minimal among all
   positive integers $n$ such that
   $\gamma \in K_n(f_i,\alpha_i) \cap \prod_{j \not= i} K_n(f_j,\alpha_j)$.  
   Applying Propositions \ref{final Galois} and \ref{final
     prop}, we see that there is an integer $n_2$ such for all $n > n_2$, we have
   \[ \Gal(K_{n}(\bF,\ba)/ K_{n-1}(\bF,\ba)) \cong C_q^{s q^{n-1}} \]
   (in the notation of Proposition \ref{final Galois}).  Since
   \[
[K_n(f_\ell,\alpha_\ell):  K_{n-1}(f_\ell,\alpha_\ell)]
  = [K_n(f_\ell,\alpha_\ell) \cdot K_{n-1}(\bF,\ba) : K_{n-1}(\bF,\ba)] = q^{q^{n-1}} 
  \] 
  for each $\ell$, it follows that any element of $K_n(f_i,\alpha_i) \cap \prod_{j \not= i}
K_n(f_j,\alpha_j)$ is in $K_{n-1}(\bF,\ba)$, and thus in both
$K_n(f_i,\alpha_i) \cap K_{n-1}(\bF,\ba) = K_{n-1}(f_i,\alpha_i)$ and
\[ \left(\prod_{j \not= i}
   K_n(f_j,\alpha_j) \right) \cap  K_{n-1}(\bF,\ba) = \prod_{j \not= i}
   K_{n-1}(f_j,\alpha_j) \]
Thus, we must have $N \leq n_2$ by minimality of $N$.   This shows that
$M_i \cap \prod_{j \not= i} M_j$ is a finite extension of $K$, as
desired.  
 \end{proof}

\section{The multitree}\label{multitree}

In this section we introduce a generalization of trees, which we call 
multitrees, in order to give a pleasant interpretation of Theorem~\ref{disjoint-theorem} in 
terms of a finite index statement. For our purposes, we can simplify the presentation of multitrees 
in~\cite[Section 11]{BT2} by avoiding the use of stunted trees.

Let $f\in K(x)$ with $\deg f\geq 2$ and set $\ba = \{\alpha_1,\dots,\alpha_s\}\subseteq K$. 
Define
\[
\cM_n(\ba)= \bigcup_{i=0}^n \bigcup_{j=1}^s f^{-i}(a_j)
\]
and
\[
G_n(\ba) = \Gal(K(\cM_n(\ba ))/K(\ba )).
\]
We refer to $\cM_n(\ba)$ as a \emph{multitree}. It can be pictured as the union of $s$ distinct trees of level $n$, rooted at the $\alpha_i$.

As $n\to\infty$, define the direct limit \[\cM_\infty(\ba) = \lim_{\longrightarrow} \cM_n(\ba)\]
 and the inverse limit \[G_\infty(\ba) = \lim_{\longleftarrow} G_n(\ba)\] 
 just as in the single tree case.
For each $n$, $G_n(\ba) $ acts faithfully on 
$\cM_n(\ba)$ in the usual way. So there are injections 
$G_n(\ba) \hookrightarrow\Aut(\cM_n(\ba))$, and thus an injection 
$G_\infty(\ba) \hookrightarrow\Aut(\cM_\infty(\ba))$, where an automorphism of the 
multitree must fix each root $\alpha_i$. 

Suppose that the individual trees rooted at $\alpha_i$ are disjoint, and that each $\alpha_i$ is neither periodic 
nor postcritical for $f$. Then the automorphism group of the infinite multitree has the simple description
\[\Aut(\cM_\infty(\ba))\cong \Aut(T_\infty^q)^s,\]
that is, the direct product of $s$ copies of $\Aut(T_\infty^q)$. 
This group has a subgroup $\left([C_q]^\infty\right)^s$, which is the direct product of $s$ copies of the 
permutation group given 
by the infinite iterated wreath product action of $C_q$ on $T_\infty^q$. If there are 
$s$ different polynomial maps $f_i(x)=x^q+c_i$ that satisfy the hypotheses of Theorem~\ref{p-theorem}, 
then it is easy to see that 
$G_\infty(\ba)$ embeds into $\left([C_q]^\infty\right)^s$. Thus we may rephrase Theorem~\ref{disjoint-theorem} 
as a finite index statement.

\begin{thm}\label{multitree theorem}
  Let $K$ be the function field of a smooth, projective, irreducible curve defined over $\Qbar$.  For
  $i =1, \dots, s$ let $f_i(x) = x^q + t_i \in K[x]$, where
  $t_i\notin \Qbar$, and suppose that $\alpha_i\in K$ are 
  neither periodic nor postcritical for $f$.  Suppose that there are no
  $i, j$ with the property that $(\alpha_i, \alpha_j)$ lies on a curve in $\A^2$
  that is periodic under the action of $(x,y) \mapsto (f_i(x), f_j(y))$. Then 
  \[[\left([C_q]^\infty\right)^s:G_\infty(\ba)]<\infty.\]
\end{thm}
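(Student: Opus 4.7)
The plan is to combine Theorem~\ref{p-theorem} with Theorem~\ref{disjoint-theorem} by a short Galois-theoretic bookkeeping argument. Write $M_i := K_\infty(f_i,\alpha_i)$ and $H_i := \Gal(M_i/K) = G_\infty(f_i,\alpha_i)$. Since each $\alpha_i \in K$ is neither periodic nor postcritical for $f_i$ and $t_i \notin \Qbar$, Theorem~\ref{p-theorem} yields $[[C_q]^\infty : H_i] < \infty$ for every $i$, and hence $\bigl[\bigl([C_q]^\infty\bigr)^s : H_1\times\cdots\times H_s\bigr] < \infty$.

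Since $\alpha_i \in K$ for all $i$, we have $K(\ba)=K$, and the field $K(\cM_\infty(\ba))$ is the compositum $M_1\cdots M_s$; in particular, $G_\infty(\ba) = \Gal(M_1\cdots M_s / K)$. The restriction map $G_\infty(\ba) \hookrightarrow H_1\times\cdots\times H_s$ is injective, so it remains to show that its image has finite index in $H_1\times\cdots\times H_s$.

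For this I would argue by induction on $s$ using the standard fact that for Galois extensions $L_1, L_2$ of $K$, the restriction map $\Gal(L_1 L_2/K) \hookrightarrow \Gal(L_1/K)\times\Gal(L_2/K)$ has image of index $[L_1 \cap L_2 : K]$. Taking $L_1 = M_i$ and $L_2 = \prod_{j\ne i} M_j$, the resulting index equals $[M_i \cap \prod_{j\ne i} M_j : K]$, which is finite by Theorem~\ref{disjoint-theorem} (whose hypothesis on periodic curves is assumed here). Peeling off one factor at a time then bounds $[H_1\times\cdots\times H_s : G_\infty(\ba)]$ by a finite product of such finite indices. Combined with the previous paragraph, this gives $[([C_q]^\infty)^s : G_\infty(\ba)] < \infty$.

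The entire substantive work is already done in Theorems~\ref{p-theorem} and~\ref{disjoint-theorem}; what remains is the routine Galois-theoretic assembly above. The only things that require care are identifying $G_\infty(\ba)$ with the Galois group of the compositum (immediate from $K(\ba)=K$) and checking that the hypotheses of the two input theorems are satisfied by each $\alpha_i$ and each pair $(\alpha_i,\alpha_j)$, both of which follow directly from the assumptions of Theorem~\ref{multitree theorem}. I do not anticipate any genuine obstacle beyond this bookkeeping.
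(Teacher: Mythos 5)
Your proposal is correct and follows essentially the same route as the paper: identify $G_\infty(\ba)$ with $\Gal(M_1\cdots M_s/K)$, use Theorem~\ref{disjoint-theorem} plus basic Galois theory to get finite index in $G_\infty(f_1,\alpha_1)\times\cdots\times G_\infty(f_s,\alpha_s)$, and then apply Theorem~\ref{p-theorem} factor by factor. Your inductive peeling-off argument just spells out what the paper compresses into ``basic Galois theory,'' so there is nothing to add.
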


\begin{proof}
The group $G_\infty(\ba)$ equals $\Gal(\prod_{i=1}^s K_\infty(f_i,\alpha_i)/K)$, which has 
finite index in the direct product $G_\infty(f_1,\alpha_1)\times\dots\times G_\infty(f_s,\alpha_s)$ by 
Theorem~\ref{disjoint-theorem} and basic Galois theory. This group in turn has finite index 
in $\left([C_q]^\infty\right)^s$ by applying Theorem~\ref{p-theorem} to each $G_\infty$ separately. 
\end{proof}

\section{The isotrivial case} \label{isotrivial case}

In this section we treat the case when the polynomial $f(x):=x^q+c$ is isotrivial, i.e, $c\in\Qbar$; also, we may assume the starting point $\beta\notin\Qbar$ since otherwise all the Galois groups corresponding to preimages of $\beta$ under iterates of $f$ would be trivial. So, throughout this section, we assume $K$ is the function field of a smooth, projective, irreducible curve defined over $\Qbar$. First we deal with the case when the polynomial $f$ is not PCF (see Proposition~\ref{easy isotrivial}); note the similarity in the statements of Proposition~\ref{easy isotrivial} and~\cite[Prop 12.1]{BT2}. In the special case $f$ is PCF, we will see in Proposition~\ref{PCF isotrivial} that we can answer even the case when the degree of the unicritical polynomial $f$ is not a prime power.

\begin{prop}\label{easy isotrivial}
Let $f(x):=x^q+c\in \Qbar[x]$ with $q=p^{r}$ a power of the prime number $p$ and let $\beta\in K\setminus \Qbar$.  
If $f$ is not PCF, then $[[C_q]^\infty:G_\infty(\beta)]=1$.
\end{prop}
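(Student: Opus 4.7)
The plan is to apply Proposition~\ref{indexprop}, which reduces the claim to showing that $\Gal(K_n(\beta)/K_{n-1}(\beta))\cong C_q^{q^{n-1}}$ for every $n$. The overall structure mirrors the proof of Theorem~\ref{p-theorem}: first establish eventual stability of $(f,\beta)$, then use Propositions~\ref{main Galois} and~\ref{final Galois} to produce primes of $K$ that witness the maximal Galois group at each level via Condition R and Condition U.

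For eventual stability I cannot invoke Proposition~\ref{stable1} directly, since $f\in\Qbar[x]$ is isotrivial, so I would first work over the subfield $\Qbar(\beta)\subseteq K$. Because $\beta\notin\Qbar$ is transcendental over the field of constants, $\Qbar(\beta)\cong\Qbar(t)$, and eventual stability of $(f,\beta)$ over this rational function field follows from a direct ramification argument using the places $(t-f^n(0))$. Here the non-PCF hypothesis is essential: it guarantees that the iterates $f^n(0)\in\Qbar$ are pairwise distinct and all nonzero, so that the places $(t-f^n(0))$ are all distinct and play the role of Condition-R primes. Since $K/\Qbar(\beta)$ is a finite algebraic extension, eventual stability of $(f,\beta)$ over $K$ follows.

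For the ramification step, the key observation is that in the isotrivial setting the height arguments of Lemma~\ref{from-Roth} degenerate (since $h_f(0)=0$), so primes satisfying Condition R must be produced by a direct geometric construction rather than a counting argument. Every place of $K$ is a place of good reduction for $f$ (as $f\in\Qbar[x]$), and the zeros of the rational function $f^n(0)-\beta\in K$ on the underlying curve $C$ are precisely the points $\lambda\in C(\Qbar)$ with $\beta(\lambda)=f^n(0)$. Because $\beta\colon C\to\P^1$ has only finitely many ramification values, for all but finitely many $n$ the element $f^n(0)$ is a regular value of $\beta$, so each preimage is a simple zero; conditions (b) and (d) of Condition R then hold automatically because the $f^i(0)$ are pairwise distinct and nonzero. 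Analogous reasoning supplies Condition U at the preimages $\alpha\in f^{-m}(\beta)$ arising after the stability threshold $m$, allowing the application of Proposition~\ref{final Galois}.

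Combining eventual stability with Propositions~\ref{main Galois} and~\ref{final Galois} yields $\Gal(K_n(\beta)/K_{n-1}(\beta))\cong C_q^{q^{n-1}}$ for all sufficiently large $n$, and hence finite index in $[C_q]^\infty$ by Proposition~\ref{indexprop}. I expect the most delicate step to be sharpening the conclusion from finite index to index exactly $1$: this requires the maximal Galois group at every level, including the base case $n=1$, where one must verify directly that $x^q+c-\beta$ is irreducible over $K$, i.e., that $\beta-c$ is not a $p$-th power in $K$ (with an additional condition when $p=2$). This is where the argument seems to demand the most care, and I would expect the proof to involve a genericity argument or a reduction to a finite extension of $K$ on which the base cases of the tower are controlled.
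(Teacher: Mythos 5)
Your proposal shares the paper's key mechanic --- using the primes at $\beta - f^n(0)$ as Condition~R primes, with the non-PCF hypothesis guaranteeing that the critical orbit $f(0), f^2(0), \dots$ is infinite and never returns to $0$ --- and your instinct to replace the height-counting lemmas by a direct construction of these primes is exactly what the paper does. But as written the proposal only delivers \emph{finite} index: your last paragraph explicitly defers the sharpening to index exactly $1$, and that sharpening is the entire content of the proposition. The missing idea is a Newton polygon argument. Set $L = \Qbar(\beta) \cong \Qbar(t)$ and examine $f^n(x)-\beta$ at the place of $L$ at infinity: the constant coefficient $f^n(0)-\beta$ has valuation $-1$ there while all other coefficients are constants, so the Newton polygon is a single segment of slope with denominator $q^n$, and $f^n(x)-\beta$ is irreducible over $L$ for \emph{every} $n$. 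This kills two birds at once. First, it removes any need for eventual stability, Capelli, passage to the preimages $\alpha \in f^{-m}(\beta)$, and Proposition~\ref{final Galois} --- whose cross-conditions (b) your sketch does not actually verify: ``analogous reasoning supplies Condition U'' glosses over the simultaneous-vanishing conditions that Proposition~\ref{FunctionFinite} and Lemma~\ref{from-5.1} handle in the non-isotrivial case and which, as you yourself observe for Lemma~\ref{from-Roth}, degenerate when $h_f(0)=0$. Second, since $\beta - f^n(0)$ is literally a uniformizer of $L$ satisfying Condition~R at every $n \ge 1$, Proposition~\ref{main Galois} applies at \emph{every} level, giving $\Gal(L_n(\beta)/L_{n-1}(\beta)) \cong C_q^{q^{n-1}}$ for all $n$ and hence index exactly $1$, with no base case left over.

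That said, your suspicion that irreducibility over $K$ itself is the delicate point is well founded: the paper's argument genuinely operates over $L=\Qbar(\beta)$, and the index-one conclusion does not transfer verbatim to an arbitrary finite extension $K \supseteq L$ (if, say, $\beta - c$ is already a $q$-th power in $K$, then $G_1(\beta)$ is a proper subgroup of $C_q$). So you have correctly located a real subtlety in the statement, but your proposal does not resolve it --- it only gestures at ``a genericity argument or a reduction to a finite extension'' --- whereas the paper resolves the Galois-theoretic core cleanly over $\Qbar(\beta)$ via the Newton polygon and obtains at worst index bounded by $[K:\Qbar(\beta)]$ over $K$.
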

\begin{proof}

The unique critical point of $f$ is $0$, which means that there are no integers $n>m>0$ 
such that $f^n(0)=f^m(0)$. In particular, $\cO_f(0)$ is infinite. 

Put $L=\Qbar(\beta)$.
Examining the Newton polygon for $f^n(x)-\beta$ with respect to the place of $L$ at infinity, we see that 
$f^n(x)-\beta$ is irreducible over $L$ for every $n$. Using the fact that the orbit $\cO_f(0)$ is infinite and 
non-repeating, it is easy to see that, for every $n$, the prime of $L$ 
generated by $\beta-f^n(0)$ satisfies Condition R for $\beta$ at $n$.
We are done by Propositions~\ref{main Galois} and~\ref{indexprop}.
\end{proof}

If $f$ is PCF of degree $d\ge 2$ (not necessarily a prime power), then we have a refined variation of the well-known fact that $G_\infty(\beta)$ has 
infinite index in $\Aut(T^d_\infty)$. The proof of Proposition~\ref{PCF isotrivial} does 
not explicitly use the fact that $f$ is isotrivial, but clearly, $c\in \Qbar$ since (as mentioned in Section~\ref{intro}) $f$ cannot be PCF otherwise.

\begin{prop}\label{PCF isotrivial}
Let $f(x):=x^d+c\in K[x]$ with $d\geq 2$ be any integer, and let $\beta\in K$. If $f$ is PCF, then $[[C_d]^\infty:G_\infty(\beta)]=\infty$.
\end{prop}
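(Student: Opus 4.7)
The plan is to use the PCF hypothesis to bound the ramification of $K_\infty(\beta)/K$, deduce that $G_\infty(\beta)$ is topologically finitely generated as a profinite group, and then contrast this with the failure of $[C_d]^\infty$ to be topologically finitely generated. The conclusion is then immediate, since any profinite group containing a topologically finitely generated closed subgroup of finite index is itself topologically finitely generated (take generators of the subgroup together with finitely many coset representatives).

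First I would invoke the remark from Section~\ref{intro} that a PCF unicritical polynomial $f(x)=x^d+c$ automatically has $c\in\Qbar$, so the critical orbit $\{f^i(0):i\ge 0\}$ is a finite subset of $\Qbar$ and the differences $f^i(0)-\beta$ form a finite subset of $K$. Let $\mathcal{S}$ be the finite union of the primes of $K$ where $f$ has bad reduction, the primes with $v_\fp(\beta)\ne 0$, and the primes where some $f^i(0)-\beta$ has nonzero valuation. Then for any prime $\fp\notin\mathcal{S}$, Condition U at $\beta$ for $f$ and $n$ holds for every $n$, so by Proposition~\ref{necessary} the prime $\fp$ is unramified in every $K_n(\beta)/K$, and hence in $K_\infty(\beta)/K$.

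Next, since $K$ is the function field of a smooth projective curve $C$ over $\Qbar$, the Galois group $G_\infty(\beta)=\Gal(K_\infty(\beta)/K)$ is a continuous quotient of the \'etale fundamental group of $C\setminus\mathcal{S}$. In characteristic zero this fundamental group is the profinite completion of the topological fundamental group of the punctured Riemann surface $C(\C)\setminus\mathcal{S}(\C)$, which is a finitely generated discrete group. Hence $G_\infty(\beta)$ is topologically finitely generated.

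For the last step, I would exhibit a continuous quotient of $[C_d]^\infty$ that is not topologically finitely generated. Using the wreath recursion $[C_d]^n\cong C_d^{d^{n-1}}\rtimes[C_d]^{n-1}$ and the transitivity of the $[C_d]^{n-1}$-action on the $d^{n-1}$ factors, the coinvariants of the base under this action collapse $C_d^{d^{n-1}}$ to a single copy of $C_d$, so by induction $[C_d]^n_{\mathrm{ab}}\cong C_d^n$, with transition maps corresponding to the projections $C_d^n\to C_d^{n-1}$ that drop the top coordinate. Passing to the inverse limit yields a continuous surjection $[C_d]^\infty\twoheadrightarrow C_d^{\N}$; modulo $p$-th powers for any prime $p\mid d$ the target becomes $C_p^{\N}$, which is infinite, so the target (hence $[C_d]^\infty$) is not topologically finitely generated. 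I expect the main obstacle to be this final computation (identifying the correct invariant of $[C_d]^\infty$ and controlling the coinvariants at each stage of the wreath recursion); the ramification argument and the appeal to finite generation of fundamental groups of open curves in characteristic zero are both standard.
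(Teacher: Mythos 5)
Your proposal is correct and follows essentially the same strategy as the paper's proof: PCF implies $K_\infty(\beta)/K$ ramifies over only finitely many primes, hence $G_\infty(\beta)$ is topologically finitely generated, while the abelianization quotient $[C_d]^\infty\twoheadrightarrow C_d^{\N}$ shows $[C_d]^\infty$ is not (the paper phrases this by comparing images under $\tau$ in $C_d^{\N}$, which is equivalent to your open-subgroup contrapositive). The only substantive difference is that you establish the finite-ramification step directly from Condition U and Proposition~\ref{necessary}, whereas the paper cites the general result of Cullinan--Hajir for PCF maps; both are valid, and your version is self-contained for unicritical polynomials.
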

\begin{proof}
We argue as in~\cite[Thm 3.1]{RafeArborealSurvey}, which shows that the image of the arboreal 
Galois representation attached to any PCF rational map of degree $d\geq 2$ has infinite index 
in $\Aut(T^d_\infty)\cong [S_d]^\infty$. 
Using the fact that the abelianization of $G[H]$ is $G^{\text{ab}}\times H^{\text{ab}}$, we see that 
the abelianization of $[C_d]^n$ is $C_d^n$. Thus there are group homomorphisms from $[C_d]^n\to C_d^n$ 
for every $n\geq 1$,  
which induce a homomorphism $\tau: [C_d]^\infty\to C_d^{\N}$.

As $f$ is PCF,  by the main result of~\cite{CullinanHajir} (see also~\cite{BIJJ}), 
the extension $K_\infty(\beta)/K$ is unramified outside a finite set of primes $S$. By work of 
Ihara~\cite[Cor 10.1.6]{Neu}, the 
Galois group $G_{K,S}$ of the maximal extension of $K$ unramified outside of $S$ is topologically generated  
by the conjugacy classes of finitely many elements.
(Here it is important that $K$ has characteristic $0$ in order to ensure that all ramification is tame.) Of course, 
$G_\infty(\beta)$ is a quotient of $G_{K,S}$ and therefore shares this property. So $\tau(G_\infty(\beta))$ is topologically 
finitely generated in $C_d^\N\cong \tau([C_d]^\infty)$. We conclude that 
$[\tau([C_d]^\infty):\tau(G_\infty(\beta))]=\infty$, and therefore $[[C_d]^\infty:G_\infty(\beta)]=\infty$.
\end{proof}

\bibliographystyle{amsalpha}
\bibliography{QuadBib}

\newcommand{\etalchar}[1]{$^{#1}$}
\providecommand{\bysame}{\leavevmode\hbox to3em{\hrulefill}\thinspace}
\providecommand{\MR}{\relax\ifhmode\unskip\space\fi MR }
\providecommand{\MRhref}[2]{%
  \href{http://www.ams.org/mathscinet-getitem?mr=#1}{#2}
}
\providecommand{\href}[2]{#2}
\begin{thebibliography}{GKNY17}

\bibitem[Bak09]{Baker1}
M.~Baker, \emph{A finiteness theorem for canonical heights attached to rational
  maps over function fields}, J. Reine Angew. Math. \textbf{626} (2009),
  205--233.

\bibitem[BIJ{\etalchar{+}}17]{BIJJ}
A.~Bridy, P.~Ingram, R.~Jones, J.~Juul, A.~Levy, M.~Manes,
  S.~Rubinstein-Salzedo, and J.~H. Silverman, \emph{Finite ramification for
  preimage fields of post-critically finite morphisms}, Math. Res. Lett.
  \textbf{24} (2017), no.~6, 1633--1647.

\bibitem[BT18a]{BridyTucker}
A.~Bridy and T.~J. Tucker, \emph{{$ABC$} implies a {Z}sigmondy principle for
  ramification}, J. Number Theory \textbf{182} (2018), 296--310.

\bibitem[BT18b]{BT2}
\bysame, \emph{Finite index theorems for iterated galois groups of cubic
  polynomials}, to appear in {\em Math. Ann.}, 2018.

\bibitem[CH12]{CullinanHajir}
J.~Cullinan and F.~Hajir, \emph{Ramification in iterated towers for rational
  functions}, Manuscripta Math. \textbf{137} (2012), no.~3-4, 273--286.

\bibitem[CL11]{CL-2}
A.~Chambert-Loir, \emph{Heights and measures on analytic spaces. a survey of
  recent results, and some remarks}, Motivic integration and its interactions
  with model theory and non-Archimedean geometry. Volume II, London Math. Soc.
  Lecture Note Ser., vol. 384, Cambridge Univ. Press, Cambridge, 2011,
  pp.~1--–50.

\bibitem[CS93]{CallSilverman}
G.~S. Call and J.~H. Silverman, \emph{Canonical heights on varieties with
  morphisms}, Compositio Math. \textbf{89} (1993), no.~2, 163--205.

\bibitem[GKNY17]{UnicriticalAndreOort}
D.~Ghioca, H.~Krieger, K.~D. Nguyen, and H.~Ye, \emph{The dynamical
  {A}ndr\'e-{O}ort conjecture: unicritical polynomials}, Duke Math. J.
  \textbf{166} (2017), no.~1, 1--25.

\bibitem[GNT13]{GNT}
C.~Gratton, K.~Nguyen, and T.~J. Tucker, \emph{{$ABC$} implies primitive prime
  divisors in arithmetic dynamics}, Bull. Lond. Math. Soc. \textbf{45} (2013),
  no.~6, 1194--1208.

\bibitem[HS00]{HindrySilverman}
M.~Hindry and J.~H. Silverman, \emph{Diophantine geometry}, Graduate Texts in
  Mathematics, vol. 201, Springer-Verlag, New York, 2000, An introduction.

\bibitem[JL17]{RafeAlon}
R.~Jones and A.~Levy, \emph{Eventually stable rational functions}, Int. J.
  Number Theory \textbf{13} (2017), no.~9, 2299--2318.

\bibitem[Jon13]{RafeArborealSurvey}
R.~Jones, \emph{Galois representations from pre-image trees: an arboreal
  survey}, Actes de la {C}onf\'erence ``{T}h\'eorie des {N}ombres et
  {A}pplications'', Publ. Math. Besan\c{c}on Alg\`ebre Th\'eorie Nr., vol.
  2013, Presses Univ. Franche-Comt\'e, Besan\c{c}on, 2013, pp.~107--136.

\bibitem[MS94]{MortonSilverman}
P.~Morton and J.~H. Silverman, \emph{Rational periodic points of rational
  functions}, Internat. Math. Res. Notices (1994), no.~2, 97--110.

\bibitem[NSW00]{Neu}
J.~Neukirch, A.~Schmidt, and K.~Wingberg, \emph{Cohomology of number fields},
  Grundlehren der Mathematischen Wissenschaften [Fundamental Principles of
  Mathematical Sciences], vol. 323, Springer-Verlag, Berlin, 2000.

\bibitem[Odo85]{OdoniIterates}
R.~W.~K. Odoni, \emph{The {G}alois theory of iterates and composites of
  polynomials}, Proc. London Math. Soc. (3) \textbf{51} (1985), no.~3,
  385--414.

\bibitem[Ser72]{Serre}
J.-P. Serre, \emph{Propri\'et\'es galoisiennes des points d'ordre fini des
  courbes elliptiques}, Invent. Math. \textbf{15} (1972), no.~4, 259--331.

\bibitem[Sil07]{SilvermanADS}
J.~H. Silverman, \emph{The arithmetic of dynamical systems}, Graduate Texts in
  Mathematics, vol. 241, Springer, New York, 2007.

\bibitem[Yam04]{Yamanoi}
K.~Yamanoi, \emph{The second main theorem for small functions and related
  problems}, Acta Math. \textbf{192} (2004), no.~2, 225--294.

\bibitem[Zha95]{Zhang-2}
S.~Zhang, \emph{Small points and adelic metrics}, J. Algebraic Geom. \textbf{4}
  (1995), 281--–300.

\end{thebibliography}

\end{document}